\newcommand{\e }{\varepsilon }
\renewcommand{\O }{\Omega}
\renewcommand{\d }{\delta}
\renewcommand{\l }{\lambda }
\newcommand{\io}{\int\limits_\Omega}
\newtheorem{theorem}{Theorem}
\newtheorem{definition}[theorem]{Definition}
\newtheorem{lemma}[theorem]{Lemma}
\newtheorem{proposition}[theorem]{Proposition}
\newtheorem{remark}[theorem]{Remark}
\newtheorem{corollary}[theorem]{Corollary}
\numberwithin{equation}{section}
\newcommand{\RR}{\mathbb R}
\renewcommand{\le}{\leqslant}
\renewcommand{\leq}{\leqslant}
\renewcommand{\ge}{\geqslant}
\renewcommand{\geq}{\geqslant}
\renewcommand{\H}{\mathbb H}
\renewcommand{\epsilon}{\varepsilon}
\newcommand{\Jnt}{\int\limits}
\def\Int{\displaystyle\int}
\begin{document}


\title[]{A nonlocal concave-convex problem with nonlocal mixed boundary data}

\thanks{}


\author[B. Abdellaoui]{Boumediene Abdellaoui}

\address[Boumediene Abdellaoui] {Laboratoire d'Analyse Nonlin\'{e}aire et Math\'{e}matiques Appliqu\'{e}es.
D\'{e}partement de Math\'{e}matiques, Universit\'{e} Abou Bakr Belka\"{i}d, Tlemcen,
Tlemcen 13000, Algeria.}
\email{\tt boumediene.abdellaoui@inv.uam.es}

\author[A. Dieb]{Abdelrazek DIEB}
\address [Abdelrazek Dieb]{Laboratoire d'Analyse Nonlin\'{e}aire et Math\'{e}matiques Appliqu\'{e}es.
Universit\'{e} Abou Bakr Belka\"{i}d, Tlemcen, Tlemcen 13000, Algeria. \\
                    And \\
D\'{e}partement de Math\'{e}matiques, Universit\'{e} Ibn Khaldoun, Tiaret,
Tiaret 14000, Algeria.}
\email{\tt dieb\_d@yahoo.fr}
\author[E. Valdinoci]{Enrico Valdinoci}
\address [Enrico Valdinoci]{University of Melbourne,
School of Mathematics and Statistics,
Richard Berry Building,
Parkville VIC 3010,
Australia,
University of Western Australia,
School of Mathematics and Statistics,
35 Stirling Highway,
Crawley, Perth WA 6009, Weierstra{\ss}-Institut f\"ur Angewandte Analysis und Stochastik,
Mohrenstra{\ss}e 39, 10117 Berlin, Germany,
Dipartimento di Matematica, Universit\`a degli studi di Milano,
Via Saldini 50, 20133 Milan, Italy, and
Istituto di Matematica Applicata e Tecnologie Informatiche,
Consiglio Nazionale delle Ricerche, Via Ferrata 1, 27100 Pavia, Italy.}
\email{\tt enrico@mat.uniroma3.it}

\thanks{ The first author is partially supported by project MTM2013-40846-P, MINECO, Spain. }
\keywords{Integrodifferential operators, fractional Laplacian, weak solutions, mixed boundary condition, multiplicity of positive solution}.\\
\subjclass[2010]{35R11, 35A15}


\begin{abstract} The aim of this paper is to study the following problem
$$P_{\lambda} \, \equiv
\left\{
\begin{array}{rcll}
(-\Delta)^s u &= & \lambda u^{q}+u^{p} & {\text{ in }}\O,\\
   u & > & 0 &{\text{ in }} \O, \\
   \mathcal{B}_{s}u &= & 0 & {\text{ in }}\RR^{N}\backslash \O, \\
\end{array}\right.
$$
with $0<q<1<p$, $N>2s$, $\l> 0$, $\Omega \subset \RR^{N} $  is a smooth bounded domain,
$$(-\Delta)^su(x)=a_{N,s}\;
P.V.\Jnt_{\RR^N}\frac{u(x)-u(y)}{|x-y|^{N+2s}}\,dy,$$
$a_{N,s}$ is a normalizing constant, and $\mathcal{B}_{s}u=u\chi_{\Sigma_{1}}+\mathcal{N}_{s}u\chi_{\Sigma_{2}}.$
Here, $\Sigma_{1}$ and $\Sigma_{2}$ are open sets in $\RR^{N}\backslash \Omega$ such that $\Sigma_{1} \cap \Sigma_{2}=\emptyset$ and
$\overline{\Sigma}_{1}\cup \overline{\Sigma}_{2}= \RR^{N}\backslash
\Omega.$

In this setting, $\mathcal{N}_{s}u$ can be seen as a Neumann condition of nonlocal type
that is compatible with the probabilistic interpretation
of the fractional Laplacian, as introduced in \cite{drv},
and $\mathcal{B}_{s}u$ is a mixed Dirichlet-Neumann exterior datum.
The main purpose of this work is to prove existence, nonexistence and multiplicity of positive
energy solutions to problem ($P_{\lambda}$) for suitable ranges of $\l$ and $p$ and to understand the interaction
between the concave-convex nonlinearity and the Dirichlet-Neumann data.
\end{abstract}

\maketitle

\tableofcontents

\section{Introduction}\label{sec:intro}
  In \cite{drv}, the authors introduced a new nonlocal Neumann condition, which is compatible with the probabilistic
interpretation of the nonlocal setting related to some L\'{e}vy process in $\RR^N$.
Motivated by this, we aim in this work to study a semilinear nonlocal elliptic  problem
with mixed Dirichlet-Neumann data. More precisely, we study existence and multiplicity
of positive solutions to the following
problem
$$
P_{\l} \quad \equiv
\left\{
\begin{array}{rcll}
(-\Delta)^s u & = & \lambda u^{q}+u^{p} & {\text{ in }}\O,\\
   u & > & 0 &{\text{ in }} \Omega, \\
   \mathcal{B}_{s}u & = & 0 &{\text{ in }} \RR^{N}\backslash \O,
\end{array}\right.
$$
with $0<q<1<p$, $N>2s$, $\lambda> 0$.

In our setting, $\Omega \subset \RR^{N} $  is a smooth bounded domain
and  $(-\Delta)^s$ is the fractional Laplacian operator,
defined as
\begin{equation}\label{frac lap}
(-\Delta)^su(x)=a_{N,s}\;
P.V.\Jnt_{\RR^N}\frac{u(x)-u(y)}{|x-y|^{N+2s}}\,dy.
\end{equation}
 See e.g. \cite{ldk}, \cite{lenoperal}, \cite{valpal} and the references therein for more information about this operator.
In this framework $a_{N,s}>0$ is a suitable normalization constant and the exterior condition
\begin{equation}\label{CUST}
\mathcal{B}_{s}u=u\chi_{\Sigma_{1}}+\mathcal{N}_{s}u\chi_{\Sigma_{2}},
\end{equation}
can be seen as a nonlocal version of the classical Dirichlet-Neumann mixed boundary condition.
As a matter of fact, here $\mathcal{N}_{s}$ is the non-local normal derivative introduced in \cite{drv}, given by

\begin{equation}\label{Nmn def}
\mathcal{N}_{s}u(x)=a_{N,s}\Int_{\Omega}\frac{u(x)-u(y)}{|x-y|^{N+2s}}dy, \quad x \in \RR^{N}\backslash \overline{\Omega}.
\end{equation}
Also, $\Sigma_{1}$ and $\Sigma_{2}$ are open sets in $\RR^{N}\backslash\Omega$ such that $\Sigma_{1} \cap \Sigma_{2}=\emptyset$ and
$\overline{\Sigma}_{1}\cup \overline{\Sigma}_{2}= \RR^{N}\backslash\Omega.$ As customary,
in \eqref{CUST} we denoted by $\chi_{A}$ the characteristic function of a set $A$.

We observe that, differently from the case of homogeneous Dirichlet conditions,
the case of Neumann and mixed boundary conditions has not been much
investigated in the fractional setting. This is due to the fact that
the classical Neumann condition combines good geometrical properties
(e.g. the normal derivative of the function vanishes, allowing
symmetry and blow-up arguments) and analytic properties, while in the
nonlocal case the consequences of \eqref{Nmn def} are much
less intuitive and harder to deal with. 
This is indeed probably the first article devoted to
the analysis of a nonlinear and nonlocal problem
with mixed exterior data that involve 
the Neumann condition of~\cite{drv}.
We notice that recently a Hopf Lemma has been
proved in \cite{barrios} for such mixed exterior conditions.\medskip

Using an integration by parts formula stated in \cite{drv}, one sees that problem ($P_{\lambda}$) can be set
in a variational setting, since the requested solutions can be seen as critical points of the functional
\begin{equation}\label{JFU}
J_{\l}(u)= \frac{1}{2}\int\int_{\mathcal{D}_{\Omega}}\frac{|u(x)-u(y)|^{2}}{|x-y|^{N+2s}}\,dx\,dy-\frac{\l}{q+1}\|u_+\|^{q+1}_{q+1}-\frac{1}{p+1}\|u_+\|^{p+1}_{p+1},
\end{equation}
where
\begin{eqnarray*}&&
\mathcal{D}_{\Omega}= (\RR^N \times \RR^N )
\setminus( \O^c\times \O^c),\\
&& \|v\|^r_r=\Int_{\O}|v|^r\,dx\quad{\mbox{ and }}\quad
u_+=\max(u,0).\end{eqnarray*}
Such problem, in the local case of the classical Laplacian, was extensively
studied in the literature, especially after the seminal work of Ambrosetti, Brezis and Cerami \cite{ABC}.
Similar problems with a Dirichlet-Neumann datum were studied, for the subcritical case,
in \cite{colopral} and, in the critical case, in \cite{grpc}.

In the nonlocal framework, (that is, when~$s\in(0,1)$), with Dirichlet data, the problem was dealt with in \cite{barrios0} for the
subcritical case and in \cite{bcss}, \cite{manu},
\cite{dmv} and~\cite{BUM} for the critical case. See also \cite{sv}, \cite{sv-DCDS}.

In \cite{bcss} and \cite{dmv}, the authors use an extension method, introduced in \cite{csext}, which allows them to
reduce the problem to a local one. We stress that, in our case, because of the nonlocal Neumann part, we
cannot use such extension and then we deal with the problem in an appropriate purely nonlocal, and somehow more general,
framework. Moreover, to obtain our multiplicity result, we have to use an additional argument which was classically developed by Alama in
\cite{alm}.

For a series of motivations about nonlocal equations
and fractional operators, see e.g.~\cite{BUC} and the references
therein.
\medskip

Our main results are the following:
\begin{theorem}\label{th1}
Let $0<s<1$, $0<q<1 <p$. Then there exists $\Lambda>0$, such that:
\begin{enumerate}
  \item For all $\lambda \in (0,\Lambda)$, problem ($P_{\lambda}$) has a minimal solution $u_{\lambda}$ such that $J_{\lambda}(u_{\lambda})<0$.
      Moreover, these solutions are ordered,
namely: if $\lambda_{1}<\lambda_{2}$ then $u_{\lambda_{1}}<u_{\lambda_{2}}$.
  \item If $\lambda>\Lambda$, problem ($P_{\lambda}$) has no positive weak solutions.
  \item If $\lambda=\Lambda$, problem ($P_{\lambda}$) has at least one weak positive solution.
\end{enumerate}
\end{theorem}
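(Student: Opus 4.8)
The plan is to follow the classical Ambrosetti–Brezis–Cerami strategy, transposed to the nonlocal energy space $X$ attached to the bilinear form
$$\langle u,v\rangle=\int\int_{\mathcal{D}_{\Omega}}\frac{(u(x)-u(y))(v(x)-v(y))}{|x-y|^{N+2s}}\,dx\,dy,$$
in which the Dirichlet part of the datum ($u=0$ on $\Sigma_{1}$) is built in, while the Neumann part enters automatically through the integration-by-parts formula of \cite{drv}. The whole statement reduces to showing that the solvability set is an interval whose endpoint
$$\Lambda:=\sup\{\lambda>0:\ (P_{\lambda})\ \text{admits a positive weak solution}\}$$
satisfies $0<\Lambda<\infty$, together with the qualitative properties in (1)–(3). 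For small $\lambda$ I would obtain the solution of part (1) by \emph{local minimization}: since $0<q<1$, the concave term $\|u_{+}\|_{q+1}^{q+1}$ dominates for functions of small energy, so $\inf_{\|u\|\le\rho}J_{\lambda}<0$ for a suitable radius $\rho$, while $J_{\lambda}>0$ on $\{\|u\|=\rho\}$ when $\lambda$ is small. Hence $J_{\lambda}$ attains a negative interior minimum at some $u_{\lambda}$, which is therefore a critical point with $J_{\lambda}(u_{\lambda})<0$; testing against $u_{\lambda,-}$ and invoking the strong maximum principle / Hopf Lemma of \cite{barrios} gives $u_{\lambda}>0$ in $\Omega$.

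Next I would construct the \emph{minimal} solution and the ordering by a monotone iteration scheme. Starting from $0$ as a subsolution and from a solution as a supersolution, the comparison principle for $(-\Delta)^{s}$ under the mixed datum produces the smallest positive solution, still denoted $u_{\lambda}$. To see that the solvability set is an interval and that the branch is ordered, I would note that if $(P_{\lambda_{2}})$ is solvable and $\lambda_{1}<\lambda_{2}$, then $u_{\lambda_{2}}$ is a strict supersolution of $(P_{\lambda_{1}})$; monotone iteration between $0$ and $u_{\lambda_{2}}$ yields $u_{\lambda_{1}}\le u_{\lambda_{2}}$, and the strong maximum principle upgrades this to $u_{\lambda_{1}}<u_{\lambda_{2}}$. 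This simultaneously shows $(0,\Lambda)$ lies in the solvability set.

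For the nonexistence in part (2), I would test the equation against the first positive eigenfunction $\varphi_{1}$ of $(-\Delta)^{s}$ under the mixed condition $\mathcal{B}_{s}\varphi_{1}=0$, with eigenvalue $\lambda_{1}>0$. Because $\min_{t>0}\big(\lambda t^{q-1}+t^{p-1}\big)\to+\infty$ as $\lambda\to\infty$, for large $\lambda$ one has $\lambda t^{q}+t^{p}\ge\mu(\lambda)\,t$ with $\mu(\lambda)>\lambda_{1}$; pairing with $\varphi_{1}$ gives
$$\lambda_{1}\int_{\Omega}u\,\varphi_{1}\,dx=\int_{\Omega}(\lambda u^{q}+u^{p})\varphi_{1}\,dx\ge\mu(\lambda)\int_{\Omega}u\,\varphi_{1}\,dx,$$
a contradiction, so $\Lambda<\infty$.

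The existence at the threshold $\lambda=\Lambda$ (part (3)) is where I expect the main obstacle. I would take $\lambda_{n}\uparrow\Lambda$ and the associated increasing family of minimal solutions $u_{n}=u_{\lambda_{n}}$ and pass to the limit; the crux is a uniform a priori bound in $X$. For this I would exploit that minimal solutions are \emph{semistable}, i.e.
$$\|\phi\|^{2}\ge\int_{\Omega}\big(\lambda_{n}q\,u_{n}^{q-1}+p\,u_{n}^{p-1}\big)\phi^{2}\,dx\qquad\text{for all admissible }\phi.$$
Choosing $\phi=u_{n}$ and subtracting the identity $\|u_{n}\|^{2}=\lambda_{n}\|u_{n}\|_{q+1}^{q+1}+\|u_{n}\|_{p+1}^{p+1}$ gives $(p-1)\|u_{n}\|_{p+1}^{p+1}\le\lambda_{n}(1-q)\|u_{n}\|_{q+1}^{q+1}$; the Hölder interpolation $\|u_{n}\|_{q+1}^{q+1}\le C_{\Omega}\|u_{n}\|_{p+1}^{q+1}$ then closes the loop, since $q+1<p+1$, yielding a bound on $\|u_{n}\|_{p+1}$ uniform in $n$ and hence on $\|u_{n}\|$. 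With this bound, along a subsequence $u_{n}\rightharpoonup u_{\Lambda}$ in $X$, and monotonicity upgrades this to strong $L^{p+1}(\Omega)$ convergence, so $u_{\Lambda}$ is a weak solution of $(P_{\Lambda})$ and is positive by the maximum principle. The recurring difficulty is establishing the comparison principle and the semistability estimate in this mixed nonlocal setting: the Neumann term $\mathcal{N}_{s}$ in \eqref{Nmn def} couples the values of $u$ inside and outside $\Omega$, so the usual pointwise arguments must be replaced throughout by energy (testing) arguments compatible with the exterior datum.
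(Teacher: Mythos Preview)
Your plan is broadly right, but there are two concrete gaps.

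The first is the monotone iteration ``starting from $0$ as a subsolution'': since the nonlinearity $\lambda t^{q}+t^{p}$ vanishes at $t=0$, the iterates stay identically $0$ and never produce a positive minimal solution. The paper supplies a strictly positive subsolution instead: it takes the unique solution $z$ of the purely concave problem $(-\Delta)^{s}z=z^{q}$, $\mathcal{B}_{s}z=0$, sets $z_{\lambda}=\lambda^{1/(1-q)}z$, and shows $z_{\lambda}\le u$ for \emph{every} solution $u$ of $(P_{\lambda})$. This last comparison is not the ordinary one for linear operators; it is a Brezis--Kamin principle (Lemma~\ref{lmbk}, using that $\sigma\mapsto f(x,\sigma)/\sigma$ is decreasing), and it is also what drives the ordering $u_{\lambda_{1}}<u_{\lambda_{2}}$ once combined with the strong maximum principle.

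The second gap is that your local-minimization step only yields $J_{\lambda}<0$ for \emph{some} solution when $\lambda$ is small; it says nothing about the minimal solution, and nothing for $\lambda$ close to $\Lambda$. The paper obtains $J_{\lambda}(u_{\lambda})<0$ for every $\lambda\in(0,\Lambda)$ directly from the semistability inequality you already state in part~(3), combined with the Euler identity $\|u_{\lambda}\|^{2}=\lambda\|u_{\lambda}\|_{q+1}^{q+1}+\|u_{\lambda}\|_{p+1}^{p+1}$; you have both ingredients but never connect them to $J_{\lambda}<0$. Incidentally, for $\Lambda<\infty$ the paper does not test against $\varphi_{1}$ but uses the Picone inequality (Theorem~\ref{PICONE}) against the subsolution $z_{\lambda}$, and at the threshold it bounds $\|u_{n}\|$ via $J_{\lambda_{n}}(u_{n})<0$ together with $J_{\lambda}(u_{n})-\tfrac{1}{p+1}J_{\lambda}'(u_{n})$, rather than your semistability-plus-interpolation estimate; these are different but equally valid devices.
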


\begin{theorem}\label{th2}
For all  $0<s<1$, $0<q<1<p<\frac{N+2s}{N-2s}$, $\l \in (0,\Lambda)$,
problem ($P_{\l}$) has a second solution $v_{\l}>u_{\l}$.
\end{theorem}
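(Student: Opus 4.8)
The plan is to obtain the second solution by a translation argument combined with the Mountain Pass Theorem, in the spirit of Ambrosetti, Brezis and Cerami \cite{ABC}. Denote by $X$ the energy space on which $J_\l$ in \eqref{JFU} is defined, with Gagliardo norm $\|\cdot\|$, and fix the minimal solution $u_\l$ given by Theorem~\ref{th1}. Writing any candidate as $u=u_\l+v$, one checks that $u$ solves $(P_\l)$ precisely when $v$ solves the shifted problem with right-hand side $\l[(u_\l+v)^q-u_\l^q]+[(u_\l+v)^p-u_\l^p]$ and homogeneous datum $\mathcal{B}_s v=0$. To force the sign $v\ge0$ (hence $u=u_\l+v\ge u_\l$), I would truncate the nonlinearity by setting
\[
g(x,t)=\l\big[(u_\l(x)+t_+)^q-u_\l(x)^q\big]+\big[(u_\l(x)+t_+)^p-u_\l(x)^p\big],\qquad G(x,t)=\Int_0^t g(x,\tau)\,d\tau,
\]
so that $g(x,t)\equiv0$ for $t\le0$, and seek a nontrivial critical point of
\[
I(v)=\frac12\int\int_{\mathcal{D}_\O}\frac{|v(x)-v(y)|^2}{|x-y|^{N+2s}}\,dx\,dy-\Int_\O G(x,v)\,dx .
\]
Any such $v$, once shown to be nonnegative, yields the desired $v_\l:=u_\l+v>u_\l$.

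The first task is to verify that $I$ has the mountain pass geometry based at $v=0$. Since $I(0)=0$, and since the superlinear term grows like $\tfrac{1}{p+1}t^{p+1}$ while the quadratic seminorm grows only quadratically, any fixed $e\ge0$, $e\not\equiv0$ gives $I(te)\to-\infty$ as $t\to+\infty$, producing the far endpoint. The delicate point is showing that $v=0$ is a strict local minimum of $I$, which is equivalent to $u_\l$ being a local minimizer of $J_\l$ in the energy topology. I would first establish that $u_\l$ is a local minimizer in the $L^\infty$ sense, exploiting its construction as a minimal solution lying below a suitable supersolution, and then upgrade this to a local minimum in $X$ by a nonlocal analogue of the Brezis--Nirenberg equivalence between $C^0$- and energy-space local minimizers; this is the point at which the argument of Alama \cite{alm} is invoked. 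The upgrade yields constants $\rho,\beta>0$ with $I(v)\ge\beta$ whenever $\|v\|=\rho$.

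With the geometry in place, set $c=\inf_{\gamma}\max_{t\in[0,1]}I(\gamma(t))\ge\beta>0$, the infimum taken over paths joining $0$ to $e$, and extract a Palais--Smale sequence at level $c$. The crucial use of the hypothesis $p<\frac{N+2s}{N-2s}$, i.e.\ $p+1<2_s^*:=\frac{2N}{N-2s}$, occurs here: it gives the compact embedding of $X$ into $L^{p+1}(\O)$, so that the $(PS)_c$ condition holds. Boundedness of the Palais--Smale sequence follows from an Ambrosetti--Rabinowitz inequality $0<\mu\,G(x,t)\le t\,g(x,t)$, valid for some $\mu\in(2,p+1]$ and $t$ large (the concave term and the $u_\l$-shift contributions being of lower order and absorbed), together with $c>0$; compactness of the sequence is then delivered by the compact Sobolev embedding. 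The Mountain Pass Theorem produces a critical point $v$ with $I(v)=c>0=I(0)$, hence $v\not\equiv0$. Testing $I'(v)=0$ against $v_-$ and using $g(x,\cdot)\equiv0$ on $(-\infty,0]$ forces $\|v_-\|=0$, so $v\ge0$; the strong maximum principle, or the Hopf lemma of \cite{barrios}, then gives $v>0$ in $\O$. Consequently $v_\l=u_\l+v$ is a second positive solution of $(P_\l)$, strictly larger than $u_\l$.

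I expect the main obstacle to lie in the local-minimizer upgrade of the second step, namely passing from $L^\infty$-local minimality of $u_\l$ to energy-space local minimality. In the local case this rests on the Brezis--Nirenberg machinery and standard elliptic regularity; in the present nonlocal, mixed Dirichlet--Neumann setting the extension and regularity tools usually employed are unavailable, so this comparison must be reconstructed directly in the fractional framework, which is where the genuinely new work is concentrated.
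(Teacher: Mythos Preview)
Your overall strategy---translate by $u_\l$, verify mountain-pass geometry for the shifted functional, and use the subcriticality $p+1<2_s^*$ for Palais--Smale---coincides with the paper's. The divergence is in the crucial step of showing that $u_\l$ is a local minimizer of $J_\l$ in $\H^s$. You propose first to obtain $L^\infty$-local minimality and then to upgrade via a Brezis--Nirenberg-type $C^0$/energy equivalence, citing Alama for the upgrade. That is not what Alama's argument actually does, and the paper follows Alama correctly by \emph{bypassing $L^\infty$ entirely}. One picks $\bar\l\in(\l,\Lambda)$ with minimal solution $\bar u>u_\l$, minimizes $J_\l$ over the order interval $M=\{w\in\H^s:0\le w\le\bar u\}$ (a variational Perron argument), and---assuming the constrained minimizer $\vartheta$ coincides with $u_\l$---proves directly that $\vartheta$ is a local minimum in the full space. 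The proof is by contradiction: given $v_n\to\vartheta$ in $\H^s$ with $J_\l(v_n)<J_\l(\vartheta)$, decompose $v_n$ into $u_n=\max\{0,\min\{v_n,\bar u\}\}\in M$, the overflow $w_n=(v_n-\bar u)_+$, and $v_n^-$; then energy estimates using that $\bar u$ is a strict supersolution at level $\l$, the Picone inequality in Theorem~\ref{PICONE}, and $|\{w_n>0\}|\to0$ yield $J_\l(v_n)\ge J_\l(\vartheta)+\tfrac12(1-q-o(1))\|w_n\|^2+\tfrac12\|v_n^-\|^2$, forcing $v_n\in M$ for large $n$, a contradiction. No $C^0$ or $L^\infty$ local-minimality step enters anywhere; this is precisely why the argument survives in the nonlocal mixed-boundary setting where, as you yourself observe, the Brezis--Nirenberg machinery is unavailable.

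Two smaller points. First, the local-minimum property obtained this way need not be strict with a barrier $I\ge\beta>0$ on a sphere; the paper only concludes $c\ge0$ and handles the borderline case $c=0$ separately via the Ghoussoub--Preiss theorem rather than the classical Ambrosetti--Rabinowitz mountain pass. Second, your Palais--Smale boundedness via an Ambrosetti--Rabinowitz-type inequality for $g$ is essentially right, though the concave perturbation requires some care; the paper does not spell this out.
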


The paper is organized as follows: In Section 2, we introduce the functional setting to deal with problem ($P_{\lambda}$),
as well as the notion of solution we will work with and some auxiliary results. Section 3 is devoted to prove the existence
of minimal and extremal solutions. Finally in Section 4 we prove the existence of a second solution using Alama's argument.

\section{Preliminaries and functional setting.}

 We introduce in  this section a natural functional framework for our problem and we give some related properties and some useful
embedding results needed when we deal with problem ($P_{\l}$).
According to the definition of the fractional Laplacian, see
\cite{valpal}, \cite{sv}, and the integration by parts formula, see
\cite{drv} , it is natural to introduce the following spaces. We
denote by $H^s(\RR^N)$ the classical fractional Sobolev space,
\begin{equation}\label{}
H^s(\RR^N)=\left\{u \in
L^2(\RR^N):\frac{|u(x)-u(y)|}{|x-y|^{\frac{N}{2}+s}} \in
L^2(\RR^N\times \RR^N)\right\},
\end{equation}
endowed with the norm
\begin{equation}\label{}
    \|u\|^2_{H^s(\RR^N)}=
\|u\|_{L^2(\RR^N)}^2 + \int\int_{\RR^N\times \RR^N}\frac{|u(x)-u(y)|^{2}}{|x-y|^{N+2s}}\,dx\,dy.
\end{equation}
It is clear that  $H^s(\RR^N)$ is a Hilbert space.

We recall now the classical Sobolev inequality that the proof can be found in \cite{valpal}. See also \cite{Ponce} for an elementary proof.
\begin{proposition}\label{PSO}
 Let $s\in (0,1)$ with $N>2s$. There exists a positive constant $S=S(N,s)$ such that,
for any function $u \in H^{s}(\RR^N)$, we have
\begin{equation}\label{sobo}
    S \|u\|_{L^{2^{*}_{s}}(\RR^N)}^2 \leq \Int\Int_{\RR^{N}\times\RR^{N}}\frac{|u(x)-u(y)|^{2}}{|x-y|^{N+2s}}\,dx\,dy,
  \end{equation}
where $2^{*}_{s}=\frac{2N}{N-2s}$.
\end{proposition}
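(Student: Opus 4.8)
The plan is to deduce \eqref{sobo} from the mapping properties of the Riesz potential together with the Hardy--Littlewood--Sobolev inequality. By density of $C_c^\infty(\RR^N)$ in $H^s(\RR^N)$ and Fatou's lemma it suffices to establish the estimate, with a uniform constant, for $u \in C_c^\infty(\RR^N)$; the general case then follows by a routine approximation. For such $u$ I would first rewrite the Gagliardo seminorm (henceforth denoted $[u]$, so that $[u]^2$ is the right-hand side of \eqref{sobo}) in frequency variables. Applying the change of variables $y = x+z$ and Plancherel's theorem in $x$ yields
$$\Int\Int_{\RR^N\times\RR^N}\frac{|u(x)-u(y)|^2}{|x-y|^{N+2s}}\,dx\,dy = \Int_{\RR^N}\left(\Int_{\RR^N}\frac{|e^{i\xi\cdot z}-1|^2}{|z|^{N+2s}}\,dz\right)|\hat u(\xi)|^2\,d\xi.$$
The inner integral is finite precisely because $0<s<1$ guarantees integrability near $z=0$ and $s>0$ integrability at infinity; by rotation invariance and the scaling $z\mapsto z/|\xi|$ it equals $c_{N,s}^{-1}|\xi|^{2s}$ for some constant $c_{N,s}>0$. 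Hence $[u]^2 = c_{N,s}^{-1}\Int_{\RR^N}|\xi|^{2s}|\hat u(\xi)|^2\,d\xi = c_{N,s}^{-1}\|(-\Delta)^{s/2}u\|_{L^2(\RR^N)}^2$.

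Next I would set $g:=(-\Delta)^{s/2}u$, so that $\hat g(\xi)=|\xi|^s\hat u(\xi)$, and recognize $u$ as the Riesz potential of order $s$ of $g$, that is $u=I_s g$ with $\widehat{I_s g}(\xi)=|\xi|^{-s}\hat g(\xi)$. The Hardy--Littlewood--Sobolev inequality provides the boundedness $\|I_s g\|_{L^{q}(\RR^N)}\le C\|g\|_{L^{r}(\RR^N)}$ whenever $\frac1q=\frac1r-\frac{s}{N}$; choosing $r=2$ forces $q=\frac{2N}{N-2s}=2^{*}_{s}$, and here the hypothesis $N>2s$ is exactly what makes $q$ finite and positive. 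Combining the two displays gives $\|u\|_{L^{2^{*}_{s}}(\RR^N)}\le C\|g\|_{L^2(\RR^N)} = C\,c_{N,s}^{1/2}[u]$, and squaring yields \eqref{sobo} with $S=(C^2 c_{N,s})^{-1}$.

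The main obstacle is the Hardy--Littlewood--Sobolev inequality itself, which is the genuinely nontrivial analytic ingredient and which I would invoke as a known result rather than reprove. The remaining technical points --- justifying the Fourier computation on the Schwartz class, the finiteness and scaling of the kernel integral, and the passage from $C_c^\infty(\RR^N)$ to $H^s(\RR^N)$ by density and Fatou --- are standard but should be handled with care. An entirely elementary alternative, which avoids the Fourier transform and the Hardy--Littlewood--Sobolev inequality altogether, proceeds via a dyadic decomposition of $u$ along its level sets $\{|u|>2^{k}\}$ and a direct estimate of the distribution function by $[u]$; in that route the difficulty shifts to the bookkeeping of the annular contributions, as carried out in \cite{Ponce}.
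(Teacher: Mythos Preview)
Your argument is correct: the Fourier identity $[u]^2=c_{N,s}^{-1}\|(-\Delta)^{s/2}u\|_{L^2(\RR^N)}^2$ together with the $L^2\to L^{2^*_s}$ boundedness of the Riesz potential $I_s$ (Hardy--Littlewood--Sobolev) gives exactly \eqref{sobo}, and the density/Fatou reduction to $C_c^\infty(\RR^N)$ is unproblematic.

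As for the comparison: the paper does not supply its own proof of this proposition at all. It merely records the inequality as classical and defers to \cite{valpal} for a proof and to \cite{Ponce} for an elementary one. Your main route via the Fourier representation and Hardy--Littlewood--Sobolev is precisely the argument carried out in \cite{valpal}, and the alternative you sketch via a dyadic decomposition along level sets is the one in \cite{Ponce}. So there is no discrepancy in approach to report; you have in effect reproduced both of the proofs the paper points to.
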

\begin{definition}\label{def1}
Let $\O$ be a bounded domain of $\RR^N$. For $0<s<1$, we define the space
$$\H^{s}(\Omega,\Sigma_1)=\left\{ u \in H^s(\RR^N):\, u=0 \text{ in } \Sigma_{1}\right\}.$$
It is clear that $\H^{s}(\Omega,\Sigma_1)$ is a Hilbert space endowed with the norm induced by $H^s(\RR^N)$.
\end{definition}
For $u\in \H^{s}(\Omega,\Sigma_1)$, we set

$$\|u\|^{2}=a_{N,s}\int\int_{\mathcal{D}_{\Omega}}\frac{|u(x)-u(y)|^{2}}{|x-y|^{N+2s}}\,dx\,dy.$$
The properties
of this norm are described by the following result.
\begin{proposition}\label{norm eqv}
The norm $\|\,.\, \|$ in $\H^{s}(\Omega,\Sigma_1)$ is equivalent to the one induced by $H^s(\RR^N)$, and then $(\H^{s}(\O,\Sigma_1), \langle\, ,\,\rangle)$ is a Hilbert space with scalar product given by
$$\langle u,v\rangle=a_{N,s}\int\int\limits_{\mathcal{D}_{\Omega}}\frac{(u(x)-u(y))(v(x)-v(y))}{|x-y|^{n+2s}}\,dx\,dy.$$
\end{proposition}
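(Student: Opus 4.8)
The plan is to prove the two-sided estimate $c\,\|u\|_{H^s(\RR^N)}\le \|u\|\le C\,\|u\|_{H^s(\RR^N)}$ on $\H^{s}(\Omega,\Sigma_1)$; everything else follows formally. Indeed, $\langle u,u\rangle=\|u\|^2$ and $\langle\cdot,\cdot\rangle$ is plainly symmetric and bilinear, so it is a scalar product precisely when $\|\cdot\|$ is a norm, i.e. when the lower bound holds; and once the norms are equivalent, completeness is inherited from $H^s(\RR^N)$, since $\H^{s}(\Omega,\Sigma_1)$ is a closed subspace (if $u_n\to u$ in $H^s(\RR^N)$ then $u_n\to u$ in $L^2$, so a subsequence converges a.e. and the constraint $u=0$ on $\Sigma_1$ is preserved). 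The upper bound is immediate from $\mathcal{D}_\Omega\subseteq\RR^N\times\RR^N$:
\[
\|u\|^2=a_{N,s}\int\int_{\mathcal{D}_\Omega}\frac{|u(x)-u(y)|^2}{|x-y|^{N+2s}}\,dx\,dy
\le a_{N,s}\int\int_{\RR^N\times\RR^N}\frac{|u(x)-u(y)|^2}{|x-y|^{N+2s}}\,dx\,dy
\le a_{N,s}\,\|u\|^2_{H^s(\RR^N)}.
\]

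The real content is the lower bound, a fractional Poincar\'e inequality that exploits the homogeneous Dirichlet datum on $\Sigma_1$. Writing $\|u\|^2_{H^s(\RR^N)}=\|u\|^2_{L^2(\RR^N)}+a_{N,s}^{-1}\|u\|^2+\int\int_{\Omega^c\times\Omega^c}\frac{|u(x)-u(y)|^2}{|x-y|^{N+2s}}\,dx\,dy$, it suffices to dominate the $L^2$ norm and the exterior seminorm by $\|u\|^2$. I would first treat $\|u\|_{L^2(\Omega)}$: for $x\in\Omega$ and $y\in\Sigma_1$ one has $(x,y)\in\mathcal{D}_\Omega$ and $u(y)=0$, hence $u(x)^2|x-y|^{-N-2s}=|u(x)-u(y)|^2|x-y|^{-N-2s}$; integrating in $y$ over a bounded portion of $\Sigma_1$ and using $\inf_{x\in\Omega}\int_{\Sigma_1\cap B_R}|x-y|^{-N-2s}\,dy>0$ yields $\int_\Omega u^2\le C\,\|u\|^2$.

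The delicate part, which I expect to be the main obstacle, is to control the remaining $L^2$ mass (that carried in the Neumann region $\Sigma_2$) together with the exterior seminorm $\int\int_{\Omega^c\times\Omega^c}$, since neither appears in $\|\cdot\|$. The long-range interactions are harmless: using $|u(x)-u(y)|^2\le 2u(x)^2+2u(y)^2$ and $\int_{|z|\ge1}|z|^{-N-2s}\,dz<\infty$, the part of the exterior integral with $|x-y|\ge1$ is bounded by $C\,\|u\|^2_{L^2(\RR^N)}$. The genuine difficulty is the near-diagonal exterior part and the mass in $\Sigma_2$: these must be fed back to the values of $u$ on $\Omega$ through the cross-kernel of $\mathcal{D}_\Omega$, which forces one to use the Dirichlet constraint on $\Sigma_1$ to propagate the vanishing, together with a triangle-inequality comparison $|u(x)-u(y)|\le|u(x)-u(z)|+|u(z)-u(y)|$ with $z$ ranging over $\Omega$. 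This is exactly the point at which the exterior geometry enters: a highly oscillatory bump placed far inside $\Sigma_2$ has small $\|\cdot\|$ but large $H^s$ seminorm, so the estimate can hold only if the Neumann part is effectively compact (e.g. $\Sigma_2$ bounded, equivalently $\Sigma_1$ containing a neighbourhood of infinity), and the proof must make use of this.

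Once both bounds are secured, $\|\cdot\|$ is a norm equivalent to the $H^s(\RR^N)$ norm, $\langle\cdot,\cdot\rangle$ is the scalar product inducing it, and completeness is inherited as above, so $(\H^{s}(\Omega,\Sigma_1),\langle\cdot,\cdot\rangle)$ is a Hilbert space.
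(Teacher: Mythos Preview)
Your direct two-sided estimate is a different strategy from the paper's, and the lower bound is left incomplete: you correctly isolate the control of $\|u\|_{L^2(\Sigma_2)}$ and of the exterior seminorm over $\Omega^c\times\Omega^c$ as the ``delicate part'', but you do not carry it out. That is a genuine gap in the proposal as written.

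The paper sidesteps this difficulty entirely by a soft functional-analytic argument. It introduces the intermediate norm
\[
\|u\|_1^2=\|u\|_{L^2(\Omega)}^2+a_{N,s}\int\int_{\mathcal{D}_\Omega}\frac{|u(x)-u(y)|^2}{|x-y|^{N+2s}}\,dx\,dy
\]
and asserts (following the argument of Proposition~3.1 in \cite{drv}) that $(\H^s(\Omega,\Sigma_1),\|\cdot\|_1)$ is complete. The Poincar\'e-type bound $\lambda_1(\Omega)>0$ from \cite{barrios} then gives $\|\cdot\|\sim\|\cdot\|_1$, so $(\H^s(\Omega,\Sigma_1),\|\cdot\|)$ is itself a Hilbert space. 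Since $\H^s(\Omega,\Sigma_1)$ is also Banach under the $H^s(\RR^N)$ norm (closed subspace, exactly as you argue) and the trivial inclusion $\|u\|\le C\|u\|_{H^s(\RR^N)}$ makes the identity a continuous bijection between two Banach spaces, the Open Mapping Theorem yields the reverse inequality automatically. The exterior seminorm and the mass on $\Sigma_2$ are thus never estimated explicitly.

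Your route, if completed, would be more constructive and self-contained; but, as you yourself observe, it appears to require a geometric hypothesis on $\Sigma_2$ that the paper does not state. The paper trades that explicit analysis for the external completeness result and the Open Mapping Theorem, which is precisely the ``missing idea'' your proposal lacks: rather than bounding the $H^s(\RR^N)$ norm from above by $\|\cdot\|$, one proves completeness of $\|\cdot\|$ directly and lets abstract nonsense close the loop.
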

\begin{proof}
For $u\in \H^{s}(\Omega,\Sigma_1)$, we set
$$
\|u\|^2_1=\|u\|^2_{L^2(\O)}+a_{N,s}\int\int_{\mathcal{D}_{\Omega}}\frac{|u(x)-u(y)|^{2}}{|x-y|^{N+2s}}\,dx\,dy.
$$
It is clear that $\H^{s}(\Omega,\Sigma_1)$ is a Hilbert space with the associated scalar product given by
$$\langle u,v\rangle_1=a_{N,s}\int\int\limits_{\mathcal{D}_{\Omega}}\frac{(u(x)-u(y))(v(x)-v(y))}{|x-y|^{n+2s}}\,dx\,dy+\io uv\,dx.$$
Notice that the completeness of $\H^{s}(\Omega,\Sigma_1)$ can be proved using exactly the same argument as in the proof of of Proposition 3.1 in \cite{drv}.

Now, setting
$$
\l_1(\O)=\inf_{\{\phi\in \H^{s}(\Omega,\Sigma_1), \phi\neq 0\}}\dfrac{a_{N,s}\displaystyle\int\int_{\mathcal{D}_{\Omega}}\frac{|\phi(x)-\phi(y)|^{2}}{|x-y|^{N+2s}}\,dx\,dy}{\displaystyle\io \phi^2\,dx},
$$
then the authors in \cite{barrios} proved that $\l_1(\O)>0$. As a consequence, the previous scalar product can be reduced to the following one
$$\langle u,v\rangle=a_{N,s}\int\int\limits_{\mathcal{D}_{\Omega}}\frac{(u(x)-u(y))(v(x)-v(y))}{|x-y|^{n+2s}}\,dx\,dy.$$
Hence we can endowed $\H^{s}(\Omega,\Sigma_1)$ with the Gagliardo norm
$$\|u\|^{2}=a_{N,s}\int\int_{\mathcal{D}_{\Omega}}\frac{|u(x)-u(y)|^{2}}{|x-y|^{N+2s}}\,dx\,dy.$$
Now, the norm $\|\,.\, \|$ in $\H^{s}(\Omega,\Sigma_1)$ is 
bounded by
the one induced by $H^s(\RR^N)$, and so,
using the Open Mapping Theorem, it holds that 
the norm $\|\,.\, \|$ in $\H^{s}(\Omega,\Sigma_1)$ is in fact
equivalent to the one induced by $H^s(\RR^N)$. Hence the result follows.
\end{proof}
The following result justifies our choice of $\|\,.\,\|$.
\begin{proposition}\label{BYP}
 Let $s \in (0,1)$, for all $u,\,v \in \H^{s}(\Omega,\Sigma_1)$ we have,
     $$ \Int_{\O}v(-\Delta)^su\,dx=
\frac{a_{N,s}}{2}\int\int\limits_{\mathcal{D}_{\Omega}}
     \frac{(u(x)-u(y))(v(x)-v(y))}{|x-y|^{N+2s}}\,dx\,dy -\Int_{\Sigma_2}v\mathcal{N}_su\,dx.$$
\end{proposition}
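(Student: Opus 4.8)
The plan is to obtain the identity as a direct consequence of the nonlocal integration by parts formula of \cite{drv}, together with the decomposition $\overline{\Sigma}_1\cup\overline{\Sigma}_2=\RR^N\setminus\O$ and the fact that the functions in $\H^s(\O,\Sigma_1)$ vanish on $\Sigma_1$. Recall that, for admissible $u,v$, the formula proved in \cite{drv} reads
\begin{equation*}
\frac{a_{N,s}}{2}\int\int\limits_{\mathcal{D}_{\Omega}}\frac{(u(x)-u(y))(v(x)-v(y))}{|x-y|^{N+2s}}\,dx\,dy=\Int_\O v\,(-\Delta)^s u\,dx+\Int_{\RR^N\setminus\O}v\,\mathcal{N}_s u\,dx,
\end{equation*}
the last integral being set on $\RR^N\setminus\O$, where $\mathcal{N}_s u$ is defined by \eqref{Nmn def}. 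This is really the only substantial ingredient; in \cite{drv} it is obtained by expanding $(-\Delta)^s u$ and $\mathcal{N}_s u$ as principal value integrals, splitting $\RR^N\times\RR^N$ into the four blocks $\O\times\O$, $\O\times\O^c$, $\O^c\times\O$ and $\O^c\times\O^c$, and symmetrizing the (symmetric) kernel by Fubini, the block $\O^c\times\O^c$ being precisely the set excised in the definition of $\mathcal{D}_{\Omega}$.

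First I would use that $\RR^N\setminus\O=\overline{\Sigma}_1\cup\overline{\Sigma}_2$ with $\Sigma_1\cap\Sigma_2=\emptyset$, so that, up to the set $\overline{\Sigma}_1\cap\overline{\Sigma}_2$ which is Lebesgue-null, the exterior integral splits as
\begin{equation*}
\Int_{\RR^N\setminus\O}v\,\mathcal{N}_s u\,dx=\Int_{\Sigma_1}v\,\mathcal{N}_s u\,dx+\Int_{\Sigma_2}v\,\mathcal{N}_s u\,dx.
\end{equation*}
Since $u,v\in\H^s(\O,\Sigma_1)$, Definition \ref{def1} gives $v\equiv 0$ on $\Sigma_1$, hence the first term on the right vanishes. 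Inserting this into the \cite{drv} identity and transferring the remaining $\Sigma_2$-contribution to the left-hand side yields exactly the stated formula.

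The point that needs care is the passage from the smooth functions for which \cite{drv} states the identity to general $u,v\in\H^s(\O,\Sigma_1)$, and I would handle it by density and continuity. The right-hand bilinear form is nothing but the scalar product $\langle u,v\rangle$ of Proposition \ref{norm eqv}, hence continuous for the norm $\|\cdot\|$; smooth functions vanishing on $\Sigma_1$ are dense in $\H^s(\O,\Sigma_1)$ for this norm; so it suffices to check that the two terms $\Int_\O v(-\Delta)^s u$ and $\Int_{\Sigma_2}v\,\mathcal{N}_s u$ are likewise stable under such approximation. The main obstacle is therefore the finiteness and continuity of the boundary term $\Int_{\Sigma_2}v\,\mathcal{N}_s u$ for merely $H^s$ data: this is controlled because, by \eqref{Nmn def}, $\mathcal{N}_s u$ is governed by the same Gagliardo double integral that defines $\|\cdot\|$, and because the combination appearing on the two sides equals the well-defined quantity $\langle u,v\rangle$, so that passing to the limit along an approximating sequence preserves the equality. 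With the smooth case and this approximation in hand, the identity extends to all of $\H^s(\O,\Sigma_1)$ and the proposition follows.
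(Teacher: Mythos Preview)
Your proposal is correct and follows exactly the route the paper takes: the paper simply states that the result is a direct application of the integration by parts formula in Lemma~3.3 of \cite{drv}, which is precisely your main ingredient, and the splitting of the exterior term over $\Sigma_1\cup\Sigma_2$ together with $v\equiv 0$ on $\Sigma_1$ is the obvious way to deduce the stated identity. Your additional discussion of the density/approximation issue is more detailed than what the paper provides (the paper says nothing about it), but it does not represent a different approach.
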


 The proof of this result is a direct application of the integration by parts
formula, see Lemma 3.3 in \cite{drv}.

In the rest of the paper, for the simplicity of typing, we shall denote
the functional space introduced in Definition \ref{def1} by $\H^{s}$ and we shall normalize the constant $a_{N,s}$ to be equal to $2$.

Now we give a Sobolev-type result for functions in $\H^s$.

\begin{corollary}\label{COROSO}
Suppose that $s\in (0,1)$ and $N>2s$. There exists a positive constant $C=C(N,s,\O,\Sigma_2)$ such that, for any function $u \in \H^{s}$,
$$
  \begin{array}{ccc}
    \|u\|_{L^{r}(\O)}^2& \leq &
    C \|u\|^2,
  \end{array}
$$
for all $1\le r\leq 2^{*}_{s}$.
\end{corollary}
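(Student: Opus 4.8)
The plan is to first prove the estimate at the endpoint exponent $r=2^{*}_{s}$, and then to deduce the entire range $1\le r\le 2^{*}_{s}$ by Hölder's inequality, exploiting the boundedness of $\O$. At the endpoint I would simply chain the two tools already at hand. Since every $u\in\H^{s}$ belongs to $H^{s}(\RR^{N})$, restricting the integration to $\O$ and then applying the fractional Sobolev inequality of Proposition \ref{PSO} gives
$$S\,\|u\|^{2}_{L^{2^{*}_{s}}(\O)}\le S\,\|u\|^{2}_{L^{2^{*}_{s}}(\RR^{N})}\le \Jnt\Jnt_{\RR^{N}\times\RR^{N}}\frac{|u(x)-u(y)|^{2}}{|x-y|^{N+2s}}\,dx\,dy\le \|u\|^{2}_{H^{s}(\RR^{N})},$$
where the last step bounds the Gagliardo seminorm by the full $H^{s}$ norm. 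Then Proposition \ref{norm eqv} supplies a constant $C_{1}=C_{1}(N,s,\O,\Sigma_{1})$ with $\|u\|^{2}_{H^{s}(\RR^{N})}\le C_{1}\|u\|^{2}$, so that $\|u\|^{2}_{L^{2^{*}_{s}}(\O)}\le (C_{1}/S)\,\|u\|^{2}$, which settles the case $r=2^{*}_{s}$.

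For a general exponent $1\le r<2^{*}_{s}$ I would use that $|\O|<\infty$. Hölder's inequality with conjugate exponents $2^{*}_{s}/r$ and $(2^{*}_{s}/r)'$ yields
$$\|u\|_{L^{r}(\O)}\le |\O|^{\frac{1}{r}-\frac{1}{2^{*}_{s}}}\,\|u\|_{L^{2^{*}_{s}}(\O)}.$$
Squaring and inserting the endpoint bound gives $\|u\|^{2}_{L^{r}(\O)}\le |\O|^{2(\frac{1}{r}-\frac{1}{2^{*}_{s}})}(C_{1}/S)\,\|u\|^{2}$. Since $r$ ranges over the compact interval $[1,2^{*}_{s}]$, the volume prefactor is uniformly bounded, for instance by $\max\{1,|\O|^{2(1-1/2^{*}_{s})}\}$, and absorbing every constant into a single $C=C(N,s,\O,\Sigma_{2})$ completes the argument.

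There is no serious obstacle: the corollary is essentially a bookkeeping combination of Proposition \ref{PSO} and Proposition \ref{norm eqv}. The only point worth attention is that all the analytic content, namely the replacement of the full $H^{s}(\RR^{N})$ norm by the $\mathcal{D}_{\Omega}$-seminorm $\|\,.\,\|$, is already packaged in Proposition \ref{norm eqv}, whose proof in turn rests on the positivity of the first eigenvalue $\l_{1}(\O)>0$ established in \cite{barrios}. I would only make sure that the dependence of the final constant on $\Sigma_{2}$ is correctly recorded, which is legitimate since $\overline{\Sigma}_{1}\cup\overline{\Sigma}_{2}=\RR^{N}\backslash\O$ with $\Sigma_{1}\cap\Sigma_{2}=\emptyset$, so that specifying $\O$ and $\Sigma_{2}$ fixes $\Sigma_{1}$, and hence the equivalence constant $C_{1}$, as well.
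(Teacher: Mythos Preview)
Your proof is correct and follows essentially the same approach as the paper: apply the Sobolev inequality of Proposition~\ref{PSO} to bound $\|u\|_{L^{2^{*}_{s}}(\O)}$ by the full Gagliardo seminorm on $\RR^{N}\times\RR^{N}$, invoke the norm equivalence of Proposition~\ref{norm eqv} to pass to $\|u\|$, and then use H\"older's inequality on the bounded domain $\O$ to cover the remaining exponents $1\le r<2^{*}_{s}$. Your intermediate detour through the full $H^{s}(\RR^{N})$ norm is harmless but not needed, since Proposition~\ref{norm eqv} already controls the $\RR^{N}\times\RR^{N}$ seminorm directly.
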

\begin{proof}
Since $u\in \H^{s}\subset H^s(\RR^N)$, then using the Sobolev inequality in \eqref{PSO}, it holds that
$$
S\|u\|_{L^{2^{*}_{s}}(\O)}^2\le  S \|u\|_{L^{2^{*}_{s}}(\RR^N)}^2 \leq \Int\Int_{\RR^{N}\times\RR^{N}}\frac{|u(x)-u(y)|^{2}}{|x-y|^{N+2s}}\,dx\,dy.
$$
Now, the result follows using H\"older inequality and Proposition \ref{norm eqv}.
\end{proof}
Consider now the standard truncation functions given by $$
T_k(u)= \max\big\{-k,\; \min\{k, u\}\big\}$$ and $G_k(u) = u - T_k(u)$.
 In this setting, the following are some useful properties of $\H^s$-functions which are needed to get some regularity
results for some elliptic problems in $\H^s$ (see also Theorem \ref{threg} below).

\begin{proposition}\label{g_k t_k}
Let $u$ be a function in $\H^{s}$, then
\begin{enumerate}
    \item  if $\Phi \in Lip(\RR)$ is such that $\Phi(0)=0$, then $\Phi(u) \in \H^{s}$. In particular for any $k>0$, $T_{k}(u),\, G_{k}(u)\in
\H^{s}.$
    \item For any $k\geq 0$
        $$ \|G_{k}(u)\|^{2}\leq \int_{\O}G_{k}(u)(-\Delta)^{s}u\,dx+\Int_{\Sigma_2} G_{k}(u)\mathcal{N}_su\,dx.$$
    \item For any $k\geq 0$
        $$ \|T_{k}(u)\|^{2}\leq \int_{\O}T_{k}(u)(-\Delta)^{s}u\,dx+\Int_{\Sigma_2}T_{k}(u)\mathcal{N}_su\,dx.$$

\end{enumerate}
\end{proposition}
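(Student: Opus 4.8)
The plan is to establish the three items in order, with (1) supplying the admissibility of the test functions needed for (2) and (3).

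For item (1), write $L$ for the Lipschitz constant of $\Phi$. Since $\Phi(0)=0$, we have the linear growth bound $|\Phi(t)|\le L|t|$ together with the contraction $|\Phi(a)-\Phi(b)|\le L|a-b|$. The vanishing on $\Sigma_1$ is immediate: as $u=0$ a.e.\ in $\Sigma_1$, we get $\Phi(u)=\Phi(0)=0$ a.e.\ there, so the constraint in Definition \ref{def1} is met. The growth bound gives $\|\Phi(u)\|_{L^2(\RR^N)}\le L\,\|u\|_{L^2(\RR^N)}<\infty$, while the contraction, divided by $|x-y|^{N+2s}$ and integrated over $\RR^N\times\RR^N$, bounds the Gagliardo seminorm of $\Phi(u)$ by $L^2$ times that of $u$. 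Hence $\Phi(u)\in H^s(\RR^N)$ and vanishes on $\Sigma_1$, i.e.\ $\Phi(u)\in\H^s$. The truncations are then covered because $T_k$ and $G_k=\mathrm{id}-T_k$ are both nondecreasing, $1$-Lipschitz, and vanish at the origin.

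For items (2) and (3), abbreviate by $\Psi$ either $T_k$ or $G_k$; by item (1) we may legitimately use $\Psi(u)\in\H^s$ as a test function. Applying the integration by parts formula of Proposition \ref{BYP} with $v=\Psi(u)$ and recalling the normalization $a_{N,s}=2$ yields
$$\int_\O \Psi(u)\,(-\Delta)^s u\,dx+\int_{\Sigma_2}\Psi(u)\,\mathcal N_s u\,dx=\iint_{\mathcal D_\Omega}\frac{\big(u(x)-u(y)\big)\big(\Psi(u(x))-\Psi(u(y))\big)}{|x-y|^{N+2s}}\,dx\,dy.$$
The decisive ingredient is then the elementary pointwise inequality, valid for every nondecreasing $1$-Lipschitz map and all reals $a,b$,
$$\big(a-b\big)\big(\Psi(a)-\Psi(b)\big)=|a-b|\,|\Psi(a)-\Psi(b)|\ge|\Psi(a)-\Psi(b)|^2,$$
which holds because $a-b$ and $\Psi(a)-\Psi(b)$ share the same sign and $|\Psi(a)-\Psi(b)|\le|a-b|$. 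Inserting $a=u(x)$ and $b=u(y)$ under the double integral bounds the right-hand side from below by the Gagliardo quadratic form of $\Psi(u)$, namely $\iint_{\mathcal D_\Omega}\frac{|\Psi(u(x))-\Psi(u(y))|^2}{|x-y|^{N+2s}}\,dx\,dy=\|\Psi(u)\|^2$, which is exactly the asserted inequality for $\Psi=G_k$ and for $\Psi=T_k$.

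The argument carries no genuinely hard step: the whole analytic content is already packaged into the integration by parts identity of Proposition \ref{BYP}, and everything else reduces to the scalar monotonicity/contraction inequality displayed above and to the linear growth of Lipschitz maps fixing the origin. The only point deserving attention is that items (2) and (3) require $\Psi(u)$ to belong to the energy space in order for Proposition \ref{BYP} to apply; this is precisely what item (1) secures, so no separate density or approximation procedure is needed.
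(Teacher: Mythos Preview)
Your proof is correct. For items (2)--(3) you take a slightly different route than the paper: you apply Proposition~\ref{BYP} with $v=\Psi(u)$ and then invoke the scalar inequality $(a-b)(\Psi(a)-\Psi(b))\ge(\Psi(a)-\Psi(b))^2$, valid for any nondecreasing $1$-Lipschitz $\Psi$, directly at the integrand level. The paper instead writes $u=T_k(u)+G_k(u)$ and shows that the cross term $\langle G_k(u),T_k(u)\rangle$ is nonnegative by arguing that at each point where $G_k(u)>0$ one has $T_k(u)=k$ maximal, whence $(-\Delta)^s T_k(u)\ge0$ and $\mathcal N_s T_k(u)\ge0$ there (and symmetrically where $G_k(u)<0$). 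Unwinding either argument, both reduce to the same scalar fact $(T_k(a)-T_k(b))(G_k(a)-G_k(b))\ge0$; your packaging has the advantages of working entirely at the level of the bilinear form (no pointwise evaluation of $(-\Delta)^s T_k(u)$ is needed, which is delicate for generic $u\in\H^s$) and of covering every nondecreasing $1$-Lipschitz truncation at once, while the paper's decomposition makes the specific interaction between $T_k$ and $G_k$ more visible.
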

\begin{proof} The claim in (1) follows from the setting of the norm given in Definition \ref{def1}.
As for (2) and (3), we claim that, for any $a$, $b\ge0$ and any $x\in\RR^N$,
\begin{equation}\label{INE}
a\,\big(G_{k}(u)(-\Delta)^{s} T_k(u)\big)(x)+
b\,\big(G_{k}(u)\mathcal{N}_s T_k(u)\big)(x)\ge0.
\end{equation}
To check this, we can take $x\in \{ G_k(u)\ne0\}$,
otherwise \eqref{INE} is obvious. Then, if $x\in \{ G_k(u)>0\}$
we have that $T_k(u)(x)=k$, which is the maximum value that $T_k(u)$
attains, and therefore $(-\Delta)^{s} T_k(u)(x)\ge0$
and $\mathcal{N}_s T_k(u)(x)\ge0$. 

Conversely, if $x\in \{ G_k(u)<0\}$
we have that $T_k(u)(x)=-k$, which is the minimum value that $T_k(u)$
attains, and therefore $(-\Delta)^{s} T_k(u)(x)\le0$
and $\mathcal{N}_s T_k(u)(x)\le0$. By combining these observations,
we obtain \eqref{INE}. {F}rom \eqref{INE} and Proposition \ref{BYP} it follows that
\begin{equation}\label{INE2}
\begin{split} &
\Int_{\O} T_k(u)(-\Delta)^s G_k(u)\,dx
+\Int_{\Sigma_2} T_k(u) \mathcal{N}_s G_k(u)\,dx
\\&\qquad\quad=
\Int_{\O} G_k(u)(-\Delta)^s T_k(u)\,dx
+\Int_{\Sigma_2} G_k(u) \mathcal{N}_s T_k(u)\,dx
\ge0.
\end{split}\end{equation}
Using the normalization condition and by Propositions \ref{BYP}, we reach that
\begin{equation}\label{SY1}
\begin{split}
\|G_k(u)\|^2\,&=
\int\int\limits_{\mathcal{D}_{\Omega}}
     \frac{(G_k(u)(x)-G_k(u)(y))^2}{|x-y|^{N+2s}}\,dx\,dy
\\ &= \Int_{\O} G_k(u)(-\Delta)^sG_k(u)\,dx+
\Int_{\Sigma_2}G_k(u)\mathcal{N}_sG_k(u)\,dx
\\ &= \Int_{\O} G_k(u)(-\Delta)^s\big(u-T_k(u)\big)\,dx+
\Int_{\Sigma_2}G_k(u)\mathcal{N}_s\big(u-T_k(u)\big)\,dx.
\end{split}
\end{equation}
In a similar way,
\begin{equation}\label{SY2}
\begin{split}
\|T_k(u)\|^2=
\Int_{\O} T_k(u)(-\Delta)^s\big(u-G_k(u)\big)\,dx+
\Int_{\Sigma_2}T_k(u)\mathcal{N}_s\big(u-G_k(u\big)\,dx
.\end{split}
\end{equation}

Then, the claim in (2) follows from \eqref{SY1} and \eqref{INE2}, while the claim in (3) follows from
\eqref{SY2} and \eqref{INE2}.
\end{proof}

 Let us now consider the  following problem,
\begin{equation}\label{pb}
\left\{
\begin{array}{rcll}
(-\Delta)^s u & = & f & {\text{ in }}\Omega,\\
   \mathcal{B}_{s}u & = & 0 &{\text{ in }} \RR^{N}\backslash \O,
\end{array}\right.
\end{equation}
where $\O $ is a bounded regular domain of $\RR^{N}$, $ N> 2s$,
$\H^{-s}$ is the dual space of $\H^{s}$ and $f\in \H^{-s}$.
\begin{definition}
 We say that $u \in \H^{s}$ is an energy solution to (\ref{pb}) if
\begin{equation}\label{eng sol}
  \Int\Int_{\mathcal{D}_{\O}}\frac{ \big(u(x)-u(y)\big)
  \big(\varphi(x)-\varphi(y)\big)}{|x-y|^{N+2s}}\,dx\,dy=(f,\varphi)
  \quad \forall \varphi \in \H^{s},
\end{equation}
where $(\, ,\, )$ represent the duality between $\H^{s}$ and
$\H^{-s}.$
\end{definition}

Notice that the existence and uniqueness of energy solutions to
problem (\ref{pb}) follow from the Lax-Milgram Theorem.
Furthermore if $f\geq 0$ then $u \geq 0$. Indeed for $u \in \H^s$, thanks
to Lemma \ref{g_k t_k}, we know that $u_-=\min\{u,0\} \in \H^s$.
Taking $u_-$ as a test function in (\ref{eng sol}) it follows that $u_- =0$.

 A supersolution (respectively, subsolution) is a function that verifies (\ref{eng sol}) with equality replaced by
``$\geq$'' (respectively, ``$\leq$'') for every non-negative test function in $\H^{s}.$
 Using a standard iterative argument we can easily prove the following result.

\begin{lemma}\label{IT6}
Assume that problem (\ref{pb}) has a subsolution $\underline{w}$
and a supersolution $\overline{w}$, verifying
$\underline{w}\leq\overline{w}$. Then there exists a solution $w$
satisfying $\underline{w}\leq w \leq\overline{w}.$
\end{lemma}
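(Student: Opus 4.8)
The plan is to prove Lemma~\ref{IT6} by the standard monotone iteration (method of sub- and supersolutions). The idea is to construct a monotone sequence of energy solutions to a sequence of \emph{linear} problems, squeezed between $\underline{w}$ and $\overline{w}$, and then pass to the limit. To set this up, I would first fix a constant $M>0$ large enough that the map $t\mapsto f(x,\cdot)+Mt$ is nondecreasing on the relevant range of values; since the present problem~\eqref{pb} has $f\in\H^{-s}$ independent of $u$, this shifting is in fact only needed when the lemma is applied to the nonlinear problem $(P_\lambda)$, where $f=\lambda u^q+u^p$. I would therefore carry out the argument for a general reaction term $f=f(x,u)$ and simply observe that~\eqref{pb} is the special case of a reaction independent of $u$.

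First I would define the iteration. Set $w_0=\underline{w}$, and given $w_{n}$, let $w_{n+1}\in\H^s$ be the unique energy solution (guaranteed by Lax--Milgram, as noted after the definition of energy solution) of the linear problem
\begin{equation*}
(-\Delta)^s w_{n+1}+M w_{n+1}=f(x,w_n)+M w_n \quad\text{in }\Omega,\qquad \mathcal{B}_s w_{n+1}=0 \quad\text{in }\RR^N\setminus\Omega.
\end{equation*}
Here the bilinear form $\langle u,\varphi\rangle+M\int_\Omega u\varphi$ is coercive on $\H^s$ by Proposition~\ref{norm eqv} and the Sobolev embedding in Corollary~\ref{COROSO}, so each $w_{n+1}$ is well defined. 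The next step is to prove, by induction, the two-sided ordering
$$\underline{w}=w_0\le w_1\le\cdots\le w_n\le w_{n+1}\le\cdots\le\overline{w}.$$

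The monotonicity is the crux of the argument. To show $w_1\ge w_0=\underline{w}$, I would subtract the subsolution inequality for $\underline{w}$ from the equation for $w_1$, test the resulting inequality with the negative part $(w_1-\underline{w})_-\in\H^s$ (admissible by part~(1) of Proposition~\ref{g_k t_k}), and use that the choice of $M$ makes the right-hand side have the correct sign; the weak maximum principle argument given just after the definition of energy solution then forces $(w_1-\underline{w})_-=0$. The step $w_{n+1}\ge w_n$ follows identically once one knows $w_n\ge w_{n-1}$, using that $t\mapsto f(x,t)+Mt$ is nondecreasing. The upper bound $w_{n+1}\le\overline{w}$ is symmetric, comparing with the supersolution inequality for $\overline{w}$ and testing with $(w_{n+1}-\overline{w})_+$. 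The main obstacle here is purely the bookkeeping of signs in these comparison steps, together with checking that the truncations and positive/negative parts used as test functions genuinely lie in $\H^s$; both are supplied by Proposition~\ref{g_k t_k}.

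Finally I would pass to the limit. The sequence $(w_n)$ is monotone and pointwise bounded between $\underline{w}$ and $\overline{w}$, so it converges a.e. to some $w$ with $\underline{w}\le w\le\overline{w}$. Testing the equation for $w_{n+1}$ with $w_{n+1}$ itself and using the $L^r(\Omega)$ bounds from Corollary~\ref{COROSO} gives a uniform bound $\|w_n\|\le C$, so up to a subsequence $w_n\rightharpoonup w$ weakly in $\H^s$; by the compact Sobolev embedding the convergence is strong in $L^r(\Omega)$ for $r<2^*_s$, which lets me pass to the limit in the weak formulation and identify $w$ as an energy solution of~\eqref{pb} (the $Mw_n$ terms on both sides cancel in the limit). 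The resulting $w$ satisfies $\underline{w}\le w\le\overline{w}$, which is the assertion. I expect the convergence of the nonlinear terms $f(x,w_n)$ to $f(x,w)$ to be the only delicate analytic point, handled by dominated convergence using the bound $|f(x,w_n)|\le\max\{|f(x,\underline w)|,|f(x,\overline w)|\}$ coming from the monotonicity of $f$ in its second argument.
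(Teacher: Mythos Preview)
Your proposal is correct and is exactly the ``standard iterative argument'' the paper invokes without spelling out a proof. The only minor slip is in the final line, where you appeal to monotonicity of $f$ itself rather than of the shifted map $t\mapsto f(x,t)+Mt$; the required domination still follows from $\underline w\le w_n\le\overline w$ together with the monotonicity of the shifted map (and in the intended application to $(P_\lambda)$ the nonlinearity $f(x,t)=\lambda t^q+t^p$ is genuinely monotone on $[0,\infty)$ anyway).
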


 Here we prove some regularity results when $f$ satisfies some
minimal integrability conditions. To prove the boundedness  of the
solution we follow the idea of Stampacchia for second order
elliptic equations with bounded coefficients. The interior H\"{o}lder
regularity is a consequence of continuity properties, see
\cite{drv}, and the regularity results in \cite {sv wek}.
\begin{lemma}\label{lmreg} Let $u$ be a solution to problem (\ref{pb}). If $f \in L^{q}(\O)$, $q > \frac{N}{2s}$, then $u \in L^{\infty}(\O).$
\end{lemma}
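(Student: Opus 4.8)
The plan is to follow Stampacchia's truncation method, now adapted to the nonlocal mixed setting by means of Proposition~\ref{g_k t_k}. For $k\ge0$ set $A_k=\{x\in\O:\,|u(x)|>k\}$ and $\phi(k)=|A_k|$. Since $G_k(u)\in\H^s$ by Proposition~\ref{g_k t_k}(1), it is admissible to take $\varphi=G_k(u)$ in the energy formulation~\eqref{eng sol}. Exploiting the monotonicity/sign information encoded in Proposition~\ref{g_k t_k}(2) — where the boundary contribution over $\Sigma_2$ drops out because $\mathcal{N}_su=0$ there — and recalling that $G_k(u)$ vanishes outside $A_k$, I would first obtain
\begin{equation*}
\|G_k(u)\|^2\le\int_{A_k}f\,G_k(u)\,dx.
\end{equation*}

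Next I would estimate the right-hand side. Let $q'=q/(q-1)$ be the conjugate exponent of $q$; the hypothesis $q>N/(2s)$ is exactly what gives $q'<N/(N-2s)<2^*_s$, and in particular $2^*_s/q'>2$. By H\"older's inequality on $A_k$, then the Sobolev-type embedding of Corollary~\ref{COROSO}, and one more application of H\"older to pass from $L^{q'}$ to $L^{2^*_s}$ on $A_k$, I would reach
\begin{equation*}
\|G_k(u)\|_{L^{2^*_s}(\O)}\le C\,\|f\|_{L^q(\O)}\,\phi(k)^{\frac{1}{q'}-\frac{1}{2^*_s}}.
\end{equation*}
For $h>k$ one has $A_h\subset A_k$ and $|G_k(u)|\ge h-k$ on $A_h$, whence $(h-k)\phi(h)^{1/2^*_s}\le\|G_k(u)\|_{L^{2^*_s}(\O)}$. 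Combining these and raising to the power $2^*_s$ produces the recursive inequality
\begin{equation*}
\phi(h)\le\frac{M}{(h-k)^{2^*_s}}\,\phi(k)^{\beta},\qquad \beta=\frac{2^*_s}{q'}-1,
\end{equation*}
with $M=\big(C\|f\|_{L^q(\O)}\big)^{2^*_s}$, and where $\phi$ is non-increasing with $\phi(0)\le|\O|<\infty$.

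The decisive point is that $q>N/(2s)$ forces $\beta>1$, which is precisely the condition needed to invoke the classical Stampacchia iteration lemma: for such a bounded non-increasing $\phi$ there exists a finite level $k_0$ (depending only on $M$, $\phi(0)$, $2^*_s$ and $\beta$) with $\phi(k_0)=0$, that is $|u|\le k_0$ a.e.\ in $\O$, so that $u\in L^\infty(\O)$. Because we work with the two-sided truncation $G_k$, both the upper and the lower bound are obtained simultaneously, with no need to repeat the argument for $-u$. I expect the only genuinely delicate point to be verifying that the nonlocal Neumann term over $\Sigma_2$ does not spoil the energy estimate; this is resolved entirely by the sign information in Proposition~\ref{g_k t_k}, after which the whole mixed nonlocal structure reduces to the same iteration as in the purely Dirichlet case.
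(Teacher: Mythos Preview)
Your proof is correct and is essentially the same Stampacchia truncation argument as the paper's: test with $G_k(u)$, use Proposition~\ref{g_k t_k}(2) together with $\mathcal N_s u=0$ on $\Sigma_2$ to obtain $\|G_k(u)\|^2\le\int_{A_k}fG_k(u)\,dx$, apply H\"older and Corollary~\ref{COROSO} to reach the level-set recursion, and close with the iteration lemma since $q>N/(2s)$ makes the exponent exceed~$1$. Your two-sided choice $A_k=\{|u|>k\}$ is in fact the one consistent with the two-sided truncation $G_k$, and your remark that this handles both bounds at once is accurate.
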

\begin{proof}
We follow here a related argument presented in \cite{lenoperal}. See also \cite{sv wek} and \cite{manu}
for related results.
Let $k > 0$ and take $\varphi = G_k(u)$ as a test function in (\ref{eng sol}).
Hence, thanks to Proposition \ref{g_k t_k}, we get
$$ \|G_{k}(u)\|^{2}\leq \int_{A_k}G_{k}(u)f\,dx+\Int_{\Sigma_2}G_{k}(u)\mathcal{N}_su\,dx,$$
where $A_k =\{x \in \O : u > k\}$. Recalling (\ref{pb}), we obtain
$$ \|G_{k}(u)\|^{2}\leq \int_{A_k}G_{k}(u)f\,dx.$$
Applying Corollary \ref{COROSO} in the left hand side and H\"{o}lder inequality in the right hand side, we obtain
$$ S^2\|G_k(u)\|_{L^{2^{*}_{s}}(\O)}^2\leq\|G_{k}(u)\|^{2}\leq \|f\|_{L^{m}(\O)} \|G_k(u)\|_{L^{2^{*}_{s}}(\O)}|A_k|^{1-\frac{1}{2^{*}_{s}}-\frac{1}{m}}$$
we have that,
$$ S^2\|G_k(u)\|_{L^{2^{*}_{s}}(\O)}^2\leq \|f\|_{L^{m}(\O)}|A_k|^{1-\frac{1}{2^{*}_{s}}-\frac{1}{m}}$$
thus,
$$ S^2(h-k)|A_h|^{\frac{1}{2^{*}_{s}}}\leq \|f\|_{L^{m}(\O)}|A_k|^{1-\frac{1}{2^{*}_{s}}-\frac{1}{m}}$$
and then,
$$|A_h|\leq S^{2^{*}_{s}-2} \frac{\|f\|_{L^{m}(\O)}^{2^{*}_{s}}|A_k|^{2^{*}_{s}(1-\frac{1}{2^{*}_{s}}-\frac{1}{m})}}{(h-k)^{2^{*}_{s}}}.$$
Since $m > \frac{N}{2s}$ we have that
$$2^{*}_{s}\,\left(1-\frac{1}{2^{*}_{s}}-\frac{1}{m}\right) > 1.$$
Hence we apply Lemma 14 in \cite{lenoperal} with $\psi(\sigma) = |A_\sigma|$ and the result follows.
\end{proof}


\begin{corollary}
 Let $u$ be an energy solution of (\ref{pb}) and suppose that $f \in L^{\infty}(\O)$.
Then $u \in C^{\gamma}(\overline{\O})$, for some $\gamma \in (0,1)$.
\end{corollary}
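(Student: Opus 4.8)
The plan is to establish global boundedness of $u$ first and then to invoke the interior H\"older regularity theory for the fractional Laplacian, upgraded to $\overline{\O}$ by means of the exterior condition $\mathcal{B}_su=0$. First, since $\O$ is bounded and $f\in L^{\infty}(\O)$, we have $f\in L^{q}(\O)$ for every $q\ge1$, and in particular for some $q>\frac{N}{2s}$. Hence Lemma \ref{lmreg} applies and yields $u\in L^{\infty}(\O)$.

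The next step, which is the one genuinely tied to the mixed exterior datum, is to propagate this bound to all of $\RR^{N}$. On $\Sigma_{1}$ we simply have $u=0$. On $\Sigma_{2}$, I would rewrite the condition $\mathcal{N}_su=0$, for a.e.\ $x\in\Sigma_2$, as
$$
u(x)=\frac{\Int_{\O}\dfrac{u(y)}{|x-y|^{N+2s}}\,dy}{\Int_{\O}\dfrac{dy}{|x-y|^{N+2s}}},
$$
both integrals being finite since $\mathrm{dist}(x,\O)>0$ there. This exhibits $u(x)$ as a weighted average of the values of $u$ on $\O$, so that $\inf_{\O}u\le u(x)\le\sup_{\O}u$, and therefore $\|u\|_{L^{\infty}(\RR^{N})}=\|u\|_{L^{\infty}(\O)}<\infty$.

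Finally, I would read the equation $(-\Delta)^su=f$ in $\O$ as a fractional equation with globally bounded solution and bounded right-hand side, and invoke the continuity properties established in \cite{drv} together with the regularity estimates of \cite{sv wek} to obtain $u\in C^{\gamma}(\overline{\O})$ for some $\gamma\in(0,1)$. The hard part will be precisely this last step: while interior H\"older regularity in $\O$ for bounded data is by now standard, the regularity up to $\partial\O$ must account for the interface where the Dirichlet region $\Sigma_{1}$ meets the Neumann region $\Sigma_{2}$, at which the one-sided barriers available in the purely Dirichlet setting are no longer at hand. It is here that the cited nonlocal regularity machinery, rather than an elementary argument, must be used.
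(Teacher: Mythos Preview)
Your proposal is correct and follows essentially the same route as the paper: first invoke Lemma~\ref{lmreg} to get $u\in L^{\infty}(\O)$, then use the representation of $u$ on $\Sigma_2$ as a weighted average of its values on $\O$ (coming from $\mathcal{N}_su=0$) to propagate the bound to all of $\RR^N$, and finally appeal to the regularity theory in \cite{drv} and \cite{sv wek}. The paper additionally records the limit of $u(x)$ as $|x|\to\infty$ along $\overline{\Sigma}_2$ via Proposition~3.13 of \cite{drv}, but this is already subsumed by your pointwise averaging bound; both arguments are equally brief on the final up-to-the-boundary step, which is handled by citation.
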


\begin{proof}
We claim that $u$ is bounded in $\RR^N$. Then one could apply interior regularity
results for the solutions to $(-\Delta)^s u=0 \in \Omega \,\, \hbox{and}\,\, u=g \,\hbox{ in }\,\,\Omega^{c}$. See e.g. \cite{sv wek} and~\cite{ros srvy}.

To check the claim, recalling Lemma \ref{lmreg}, we have to consider only the case $x\in \overline{\Sigma}_2$. Then, by (\ref{Nmn def})
 \begin{equation*}
        u(x)= c(N,s)^{-1}\Int_{\O}\frac{u(y)}{|x-y|^{N+2s}}\, dy \,\, \hbox{, where}\, \,c(N,s)= \Int_{\O}\frac{1}{|x-y|^{N+2s}}\,dy.
 \end{equation*}
 Hence,
 \begin{equation}\label{clm reg 1}
   |u(x)|\leq \|u\|_{L^{\infty}(\O)} \,\,\hbox{for all }\,\, x\in \overline{\Sigma}_2.
 \end{equation}
 Also, if $\Sigma_2$ is unbounded, using Proposition 3.13 in \cite{drv}, we have
 \begin{equation}\label{clm reg 2}
   \lim_{x\rightarrow \infty,\, x\in \overline{\Sigma}_2}u(x)=\frac{1}{|\O|} \Int_{\O}u(y)\,dy.
 \end{equation}
Then the claim follows from  Lemma \ref{lmreg}, inequalities (\ref{clm reg 1}) and (\ref{clm reg 2}).
\end{proof}
 As a variation of Lemma \ref{lmreg}, we point out that if $f=f(x,u)$ and $f$ has the following growth
 \begin{equation}\label{growth}
 |f(x,s)|\leq c (1+|s|^p) \,\, \hbox{where}\, \, p\leq \frac{N+2s}{N-2s},
\end{equation}
 then, using a Moser iterative scheme, we can prove that:
\begin{theorem}\label{threg}
Let $u$ be an energy solution to problem (\ref{pb}) with $f$ satisfies (\ref{growth}), then $u\in L^\infty (\O)$.
\end{theorem}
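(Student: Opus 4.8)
The plan is to run a Moser-type iteration based on the weak formulation \eqref{eng sol}, the convexity of the Gagliardo energy, and the Sobolev embedding of Corollary \ref{COROSO}, with the genuinely critical exponent $p=\frac{N+2s}{N-2s}$ handled by a Brezis--Kato absorption. The reason a bootstrap is forced upon us is the following: a priori we only know $u\in\H^{s}\subset L^{2^{*}_{s}}(\O)$, and since \eqref{growth} gives $p+1\le 2^{*}_{s}$, the right-hand side $f(x,u)$ sits exactly at the borderline integrability $f(x,u)\in L^{2N/(N+2s)}(\O)$, which is strictly below the threshold $\frac{N}{2s}$ required by Lemma \ref{lmreg}. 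So Lemma \ref{lmreg} cannot be applied directly; instead I would first gain $u\in L^{r}(\O)$ for every finite $r$, and only then invoke Lemma \ref{lmreg}.

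First I would fix $\beta\ge 1$ and a truncation level $L>0$, set $u_{L}=\min\{|u|,L\}$, and test \eqref{eng sol} with $\varphi=u\,u_{L}^{2(\beta-1)}$. For each fixed $L$ the map $t\mapsto t\,\min\{|t|,L\}^{2(\beta-1)}$ is Lipschitz and vanishes at the origin, so by Proposition \ref{g_k t_k}(1) we have $\varphi\in\H^{s}$, which makes it an admissible test function (the $\Sigma_{2}$ contribution is already encoded in the energy identity and carries the correct sign, exactly as in the proof of Lemma \ref{lmreg}). Writing $w_{L}=u\,u_{L}^{\beta-1}$, an elementary convexity inequality of the type
$$(a-b)\big(a\,|a|^{2(\beta-1)}-b\,|b|^{2(\beta-1)}\big)\ge c_{\beta}\,\big(a\,|a|^{\beta-1}-b\,|b|^{\beta-1}\big)^{2},\qquad c_{\beta}\sim \tfrac{1}{\beta},$$
applied pointwise under the integral over $\mathcal{D}_{\O}$ (in its truncated form for $u_{L}$) yields $\|w_{L}\|^{2}\le C\beta\,B(u,\varphi)$, where $B(u,\varphi)$ denotes the left-hand side of \eqref{eng sol}. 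Combining this with Corollary \ref{COROSO} gives $\|w_{L}\|_{L^{2^{*}_{s}}(\O)}^{2}\le C\beta\int_{\O}f(x,u)\,\varphi\,dx$, and by \eqref{growth} the right-hand side is dominated by $C\beta\int_{\O}(1+|u|^{p})\,|u|\,u_{L}^{2(\beta-1)}\,dx$.

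The main obstacle is the critical term $\int_{\O}|u|^{p+1}u_{L}^{2(\beta-1)}\,dx=\int_{\O}|u|^{2^{*}_{s}-2}\,w_{L}^{2}\,dx$ (the lower-order term $\int_{\O}|u|\,u_{L}^{2(\beta-1)}\,dx$ and, when $p<2^{*}_{s}-1$, the strictly subcritical term are controlled more easily by Young's inequality once $u\in L^{2\beta}$). This critical integral cannot be absorbed by a fixed constant, and this is where the Brezis--Kato splitting enters. I would split $\O$ into $\{|u|\le M\}$ and $\{|u|>M\}$: on the first set $|u|^{2^{*}_{s}-2}\le M^{2^{*}_{s}-2}$, so this part is bounded by $M^{2^{*}_{s}-2}\|w_{L}\|_{L^{2}(\O)}^{2}$; on the second, Hölder with exponents $\frac{2^{*}_{s}}{2^{*}_{s}-2}$ and $\frac{2^{*}_{s}}{2}$ bounds it by $\big(\int_{\{|u|>M\}}|u|^{2^{*}_{s}}\big)^{(2^{*}_{s}-2)/2^{*}_{s}}\,\|w_{L}\|_{L^{2^{*}_{s}}(\O)}^{2}$. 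Since $u\in L^{2^{*}_{s}}(\O)$, the absolute continuity of the integral allows me to choose $M=M(\beta)$ (depending on $\beta$ but \emph{not} on $L$) so large that the prefactor is below $\frac{1}{2C\beta}$, and the resulting term $\tfrac12\|w_{L}\|_{L^{2^{*}_{s}}(\O)}^{2}$ is absorbed into the left-hand side (this is legitimate because $w_{L}$ is bounded, hence $\|w_{L}\|_{L^{2^{*}_{s}}(\O)}<\infty$).

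After absorption one is left with $\|w_{L}\|_{L^{2^{*}_{s}}(\O)}^{2}\le C(\beta,M)\big(1+\int_{\O}|u|^{2\beta}\,dx\big)$; letting $L\to\infty$ by monotone convergence gives the implication $u\in L^{2\beta}(\O)\Rightarrow u\in L^{2^{*}_{s}\beta}(\O)$, with $\|u\|_{L^{2^{*}_{s}\beta}(\O)}^{2\beta}\le C(1+\|u\|_{L^{2\beta}(\O)}^{2\beta})$. Setting $\chi=\frac{2^{*}_{s}}{2}>1$ and iterating from the base case $\beta=1$ (where $u\in L^{2}(\O)$) propagates integrability along $2\chi^{n}\to\infty$, so $u\in L^{r}(\O)$ for every $r<\infty$; note that for this Brezis--Kato route the $\beta$-dependence of $M$ and $C(\beta,M)$ is irrelevant, since we only need finiteness at each finite step. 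Finally, choosing $r$ so large that $\frac{r}{p}>\frac{N}{2s}$ gives $f(x,u)\in L^{r/p}(\O)$ with $\frac{r}{p}>\frac{N}{2s}$, and Lemma \ref{lmreg} upgrades this to $u\in L^{\infty}(\O)$, which completes the proof.
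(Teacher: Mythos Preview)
Your proposal is correct and follows precisely the Moser iterative scheme that the paper indicates (but does not spell out) immediately before the statement of Theorem~\ref{threg}; the Brezis--Kato absorption you use to handle the critical exponent $p=\frac{N+2s}{N-2s}$, followed by the appeal to Lemma~\ref{lmreg} once $u\in L^{r}(\O)$ for all finite $r$, is the standard and appropriate way to execute this. One small remark: in the weak formulation \eqref{eng sol} there is no explicit $\Sigma_{2}$ boundary term to worry about---the identity $\langle u,\varphi\rangle=\int_{\O}f\varphi$ holds directly, and the estimate $\|w_{L}\|^{2}\le C\beta\,\langle u,\varphi\rangle$ follows from your pointwise convexity inequality applied under the integral over $\mathcal{D}_{\O}$, with no sign consideration needed.
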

 The following is a strong maximum principle for semi-linear
equations, it will be used to separate minimal solution of problem
($P_\l$) for different values of the parameter $\l$, see \cite{mteo}.
\begin{proposition}\label{strg max}
Let $N\geq1$, $0<s<1$ and let $f_1$, $f_2:\RR^N\times\RR \rightarrow \RR$
be two continuous functions. Let $\O$ be a domain in $\RR^N$ and
$v,w \in L^{\infty}(\RR^N)\cap C^{2s+\gamma}(\O)$, for some $\gamma>0$, be such that
 $$
\left\{
\begin{matrix}
  (-\Delta)^{s} v \geq f_1(x,v), &\text{in}& \O, \\
                     &                 \\
  (-\Delta)^{s} w \leq f_2(x,w), &\text{in}& \O, \\
                     &                  \\
         v\geq w                 & \text{in}& \RR^N.
\end{matrix}
\right.
$$
Suppose furthermore that
\begin{equation}\label{groth prmax}
  f_2(x,w(x))\leq f_1(x,w(x)) \text{  for any } x \in \O.
\end{equation}
If there exists a point $x_0 \in \O$ at which $v(x_0)=w(x_0)$, then $v=w$ in the whole $\O$.
\end{proposition}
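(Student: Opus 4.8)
The plan is to prove this strong maximum principle by a contradiction argument built on the pointwise structure of the fractional Laplacian at an interior touching point. Set $z = v - w \ge 0$ in all of $\RR^N$ by hypothesis, and suppose there exists $x_0 \in \O$ with $z(x_0) = 0$, i.e. $v(x_0) = w(x_0)$. The regularity assumption $v, w \in C^{2s+\gamma}(\O)$ guarantees that $(-\Delta)^s v(x_0)$ and $(-\Delta)^s w(x_0)$ are both classically defined as absolutely convergent (principal value) integrals, so that all the manipulations below make sense pointwise.

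First I would subtract the two differential inequalities and evaluate the resulting singular integral expression at the touching point $x_0$. Using the definition \eqref{frac lap}, since $z(x_0) = 0$ we compute
\begin{equation*}
(-\Delta)^s z(x_0) = (-\Delta)^s v(x_0) - (-\Delta)^s w(x_0)
= a_{N,s}\, P.V.\Int_{\RR^N} \frac{z(x_0) - z(y)}{|x_0 - y|^{N+2s}}\,dy
= -a_{N,s}\Int_{\RR^N} \frac{z(y)}{|x_0 - y|^{N+2s}}\,dy.
\end{equation*}
On the other hand, combining the two inequalities with the ordering hypothesis \eqref{groth prmax} at the point $x_0$ (where $v(x_0) = w(x_0) = w(x_0)$, so that $f_1(x_0, v(x_0)) = f_1(x_0, w(x_0)) \ge f_2(x_0, w(x_0))$), I obtain
\begin{equation*}
(-\Delta)^s z(x_0) = (-\Delta)^s v(x_0) - (-\Delta)^s w(x_0)
\ge f_1(x_0, v(x_0)) - f_2(x_0, w(x_0)) \ge 0.
\end{equation*}
Putting the two displays together forces $\int_{\RR^N} z(y)\,|x_0 - y|^{-(N+2s)}\,dy \le 0$. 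But the integrand is nonnegative, since $z \ge 0$ everywhere and the kernel is strictly positive, so the integral is $\ge 0$; hence it equals $0$. Because the kernel $|x_0 - y|^{-(N+2s)}$ is strictly positive for a.e.\ $y \in \RR^N$, this is only possible if $z = 0$ almost everywhere, and by the continuity of $v$ and $w$ it follows that $z \equiv 0$, i.e.\ $v = w$ in all of $\RR^N$, in particular in $\O$.

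The main subtlety, which is really the only delicate point, is ensuring that the pointwise identity $(-\Delta)^s z(x_0) = -a_{N,s}\int z(y)\,|x_0-y|^{-(N+2s)}\,dy$ is legitimate, i.e.\ that the principal value integral converges absolutely once the $z(x_0)$ term is dropped. This is precisely what the $C^{2s+\gamma}$ regularity near $x_0$ provides: near the singularity the numerator $z(x_0) - z(y) = -z(y)$ has the cancellation needed for convergence of the principal value, and since $z(x_0) = 0$ is a minimum of the nonnegative function $z$, the symmetric difference quotient is controlled there; away from $x_0$ the boundedness $z \in L^\infty(\RR^N)$ makes the tail integrable against the decaying kernel. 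Once convergence is secured, the nonlocal nature of the operator does the rest of the work essentially for free: unlike in the local case, no Hopf-type boundary analysis or iterated propagation is needed, because the single evaluation at $x_0$ already integrates the sign of $z$ over the \emph{entire} space, so a nonnegative function touching zero at one interior point and integrating to zero against a strictly positive kernel must vanish identically.
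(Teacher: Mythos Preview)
Your proof is correct and follows essentially the same approach as the paper: evaluate $(-\Delta)^s(v-w)$ pointwise at the touching point, bound it below by $0$ via the differential inequalities and the ordering hypothesis \eqref{groth prmax}, bound it above by $0$ via the integral representation and the sign $v-w\ge 0$, and conclude that the integrand vanishes a.e. The only organizational difference is that the paper wraps this computation in an open--closed connectedness argument on the zero set $Z_\phi\subset\Omega$, whereas you observe (correctly) that the single evaluation at $x_0$ already forces $v-w=0$ a.e.\ in all of $\RR^N$, since the kernel is strictly positive everywhere; continuity in $\Omega$ then upgrades this to the desired identity on $\Omega$.
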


\begin{proof}
Let $\phi=v-w$ and set
$$Z_{\phi}= \left\{ x\in \O: \phi(x)=0 \right\}.$$
By assumption $x_0 \in Z_{\phi}$. Moreover, thanks to the continuity of $\phi$, we know that
$Z_{\phi}$ is closed.
We claim now that $Z_{\phi}$ is also open. Indeed, let $\bar{x} \in Z_{\phi}$. Clearly $\phi \geq 0$ in $\RR^N$, $\phi(\bar{x})=0$ and
$$(-\Delta)^s \phi(\bar{x})\geq f_1(\bar{x},v(\bar{x}))-f_2(\bar{x},w(\bar{x}))=f_1(\bar{x},w(\bar{x}))-f_2(\bar{x},w(\bar{x}))\geq 0,$$
in view of (\ref{groth prmax}). Accordingly,
$$
\begin{matrix}
  0 & \leq& (-\Delta)^s\phi(\bar{x})&=&\displaystyle\frac{1}{2}\Int_{\RR^N}\frac{2\phi(\bar{x})-\phi(\bar{x}+z)-\phi(\bar{x}-z)}{|z|^{N+2s}}\,dz \\
    & & & =& \displaystyle\frac{1}{2}\Int_{\RR^N}\frac{-\phi(\bar{x}+z)-\phi(\bar{x}-z)}{|z|^{N+2s}}\,dz \leq 0. \quad \quad \quad\\
\end{matrix}
$$
Hence $\phi $ vanishes identically in $B_{\epsilon}(\bar{x})$ and then, for $\epsilon$ small, $B_{\epsilon}(\bar{x}) \subseteq Z_{\phi}$.
That is, we have proved that $Z_{\phi}$ is open, and so, by the connectedness of $\O$, we get that $Z_{\phi}=\O$.
\end{proof}
 Now we establish two important results for our purposes. The first result
is a Picone-type inequality and the second is a Brezis-Kamin comparison
principle for concave nonlinearities.

\begin{theorem}\label{PICONE}
Consider $u,\, v \in \H^{s}$, suppose that $(-\Delta)^{s}u\geq 0$ is
a bounded Radon measure in $\O$, $u\geq0$ and not identically zero,
then,
$$\Int_{\Sigma_2}\frac{|v|^2}{u}\mathcal{N}_su\,dx+\Int_{\O}\frac{|v|^2}{u}(-\Delta)^{s}u\, dx \leq \Int\Int_{\mathcal{D}_{\O}}\frac{\big(v(x)-v(y)\big)^{2}}{|x-y|^{N+2s}}\,dx\,dy. $$
\end{theorem}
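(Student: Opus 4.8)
The plan is to reduce the statement to a pointwise algebraic inequality of Picone type, integrate it against the kernel $|x-y|^{-(N+2s)}$, and then recognize the resulting bilinear expression through the integration by parts formula of Proposition~\ref{BYP}. The starting observation is the elementary inequality: for all $a,b>0$ and $c,d\in\RR$,
$$(c-d)^2\ge (a-b)\Big(\frac{c^2}{a}-\frac{d^2}{b}\Big).$$
Indeed, a direct expansion shows that the difference of the two sides equals $\frac{b}{a}c^2+\frac{a}{b}d^2-2cd$, which is nonnegative by the arithmetic–geometric mean inequality, since $\frac{b}{a}c^2+\frac{a}{b}d^2\ge 2|cd|\ge 2cd$.

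Because $u$ is only assumed nonnegative, I would first regularize by setting $u_\eps:=u+\eps$ with $\eps>0$, so that $u_\eps\ge\eps>0$ while $u_\eps(x)-u_\eps(y)=u(x)-u(y)$ and also $(-\Delta)^su_\eps=(-\Delta)^su$, $\mathcal{N}_su_\eps=\mathcal{N}_su$. Applying the pointwise inequality with $a=u_\eps(x)$, $b=u_\eps(y)$, $c=v(x)$, $d=v(y)$, dividing by $|x-y|^{N+2s}$ and integrating over $\mathcal{D}_{\O}$ yields, with $\psi_\eps:=|v|^2/u_\eps$,
$$\Int\Int_{\mathcal{D}_{\O}}\frac{(v(x)-v(y))^2}{|x-y|^{N+2s}}\,dx\,dy\ge \Int\Int_{\mathcal{D}_{\O}}\frac{\big(u(x)-u(y)\big)\big(\psi_\eps(x)-\psi_\eps(y)\big)}{|x-y|^{N+2s}}\,dx\,dy.$$
The purpose of the regularization is that $\psi_\eps$ is now a bounded function, hence (after a preliminary reduction to bounded $v$ by truncation, using that Lipschitz compositions preserve $\H^s$ as in Proposition~\ref{g_k t_k}(1)) an admissible test function in $\H^s$. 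Consequently Proposition~\ref{BYP} applies to the pair $(u,\psi_\eps)$ and identifies the right-hand side above with $\Int_{\O}\psi_\eps(-\Delta)^su\,dx+\Int_{\Sigma_2}\psi_\eps\,\mathcal{N}_su\,dx$.

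To conclude I would let $\eps\to0$. On $\O$, since $(-\Delta)^su$ is a nonnegative measure and $\psi_\eps=|v|^2/u_\eps\nearrow |v|^2/u$ monotonically as $\eps\downarrow0$, the monotone convergence theorem gives $\Int_{\O}\psi_\eps(-\Delta)^su\,dx\to\Int_{\O}\frac{|v|^2}{u}(-\Delta)^su\,dx$. On $\Sigma_2$, where $\mathcal{N}_su$ has no definite sign, I would instead use dominated convergence, relying on the bound $|\psi_\eps\,\mathcal{N}_su|\le \frac{|v|^2}{u}|\mathcal{N}_su|$ valid for every $\eps$ (since $u_\eps\ge u$), together with the integrability of the dominating function. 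Passing to the limit in the displayed inequality then produces exactly the asserted bound.

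The main obstacle is precisely the tension between the low regularity of $(-\Delta)^su$, which is merely a bounded Radon measure rather than an element of $\H^{-s}$, and the singular, possibly unbounded test function $|v|^2/u$. The regularization $u_\eps=u+\eps$ (and the preliminary truncation of $v$) is what makes Proposition~\ref{BYP} rigorously applicable, while the delicate part is to pass to the limit without losing mass: this is where the nonnegativity of the measure $(-\Delta)^su$, yielding monotone convergence on $\O$, and a careful dominated-convergence argument on $\Sigma_2$ become essential. One must also keep in mind that the limiting integrals on the left-hand side may a priori be infinite, which is fully consistent with the statement, since they are controlled from above by the finite Gagliardo energy of $v$.
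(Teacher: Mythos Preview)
Your proposal is correct and follows exactly the route the paper indicates: the paper's proof consists of a single sentence stating that the result ``is based on a punctual inequality and follows in the same way as in \cite{lenoperal}'', and your pointwise Picone inequality $(c-d)^2\ge(a-b)\big(\frac{c^2}{a}-\frac{d^2}{b}\big)$ together with integration over $\mathcal{D}_\Omega$ and Proposition~\ref{BYP} is precisely that argument, with your $\eps$-regularization and truncation of $v$ supplying the details the paper omits. One small point to tighten: the dominated-convergence step on $\Sigma_2$ presupposes the integrability of $\frac{|v|^2}{u}|\mathcal{N}_s u|$, which you assert but do not verify; in the paper's applications one always has $\mathcal{N}_s u=0$ on $\Sigma_2$, but for the general statement you may prefer to split $\mathcal{N}_s u$ into its positive and negative parts and apply monotone convergence to each, interpreting the left-hand side in the extended sense.
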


The proof of this result is based on a punctual inequality and follows in the same
way as in \cite{lenoperal}.
As a consequence, we have the next comparison principle that extends to the fractional framework the classical
one obtained by Brezis and Kamin, see \cite{bk}.

\begin{lemma}\label{lmbk}
Let $f(x,\sigma)$ be a Carath\'{e}odory function such that
$\frac{f(x,\sigma)}{\sigma}$ is decreasing in $\sigma$, uniformly
with respect to $x \in \O$. Suppose that $u, v \in \H^{s}$, with $0<s<1$,
are such that
$$
\left\{
\begin{matrix}
  (-\Delta)^{s} u \geq f(x,u), & u>0 \quad \text{in} \ \O, \\
                     &                 \\
  (-\Delta)^{s} v \leq f(x,v), & v>0 \quad \text{in} \ \O.
\end{matrix}
\right.
$$
Then $u\geq v$ in $\O$.
\end{lemma}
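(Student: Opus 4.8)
\emph{Plan.} The plan is to argue by a nonlocal Díaz--Saá type comparison, combining the monotonicity of $\sigma\mapsto f(x,\sigma)/\sigma$ with the punctual inequality that underlies the Picone estimate of Theorem \ref{PICONE}. Throughout I write $g(x,\sigma)=f(x,\sigma)/\sigma$ (decreasing in $\sigma$) and abbreviate
$$B(w,\varphi)=\int\int_{\mathcal{D}_{\Omega}}\frac{(w(x)-w(y))(\varphi(x)-\varphi(y))}{|x-y|^{N+2s}}\,dx\,dy .$$
By Proposition \ref{BYP} and the normalization $a_{N,s}=2$, the inequality $(-\Delta)^su\ge f(x,u)$ tested against a nonnegative $\varphi\in\H^{s}$ reads $B(u,\varphi)\ge\int_{\O}f(x,u)\varphi\,dx$, and symmetrically $B(v,\psi)\le\int_{\O}f(x,v)\psi\,dx$ for the subsolution $v$ and every $\psi\ge0$ in $\H^{s}$.

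The core of the argument is the choice of the test functions. I would set $\phi=(v^2-u^2)_+$, which lies in $\H^{s}$ (it is a Lipschitz, hence admissible, expression in the bounded functions $u,v$), is nonnegative, vanishes on $\Sigma_1$ where $u=v=0$, and is supported on $\{v>u\}$. Using $\phi/u$ in the supersolution inequality and $\phi/v$ in the subsolution inequality (both nonnegative, since $u,v>0$ in $\O$) and recalling $f(x,\sigma)/\sigma=g(x,\sigma)$, one gets
$$B(u,\phi/u)\ge\int_{\O}g(x,u)\,\phi\,dx \qquad\text{and}\qquad B(v,\phi/v)\le\int_{\O}g(x,v)\,\phi\,dx .$$
Subtracting the first from the second yields
$$B(v,\phi/v)-B(u,\phi/u)\le\int_{\O}\big(g(x,v)-g(x,u)\big)\,\phi\,dx .$$
Since $\phi>0$ precisely on $\{v>u\}$, where $g(x,v)\le g(x,u)$ by monotonicity, the right-hand side is $\le0$.

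It remains to prove that the left-hand side is $\ge0$, which is the nonlocal Díaz--Saá inequality. I would obtain it, in the same spirit as the punctual inequality behind Theorem \ref{PICONE}, by checking the pointwise bound: for all $a,b,c,d>0$,
$$(c-d)\Big(\tfrac{(c^2-a^2)_+}{c}-\tfrac{(d^2-b^2)_+}{d}\Big)-(a-b)\Big(\tfrac{(c^2-a^2)_+}{a}-\tfrac{(d^2-b^2)_+}{b}\Big)\ge0,$$
and then integrating it with $a=u(x),\,b=u(y),\,c=v(x),\,d=v(y)$ against the kernel $|x-y|^{-N-2s}$ over $\mathcal{D}_{\Omega}$. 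Combining the two bounds squeezes $B(v,\phi/v)-B(u,\phi/u)=0$ and forces $\int_{\O}(g(x,v)-g(x,u))\phi\,dx=0$; as the integrand is nonpositive and, by the strict decrease of $g$, strictly negative wherever $\phi>0$, this gives $\phi\equiv0$, i.e. $v\le u$ in $\O$ (if $g$ is only nonstrictly decreasing one invokes instead the equality case of the pointwise inequality).

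The main obstacle is twofold. First, $\phi/u$ need not a priori belong to $\H^{s}$ because $u$ may vanish on $\partial\O$; I would handle this by replacing $u$ by $u+\eta$ in the denominators, running the argument for fixed $\eta>0$, and passing to the limit $\eta\to0^{+}$ by monotone convergence, using that $u,v\in L^{\infty}(\O)$ by Lemma \ref{lmreg} (or Theorem \ref{threg}). Second, and more substantially, the verification of the pointwise Díaz--Saá inequality displayed above is the genuine analytic content: it is the nonlocal counterpart of the convexity that drives the Brezis--Kamin comparison, and it is exactly the place where the structural hypothesis $\sigma\mapsto f(x,\sigma)/\sigma$ decreasing is used.
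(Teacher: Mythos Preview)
The paper does not supply a proof here but simply refers to Theorem~20 in \cite{lenoperal}, presenting the lemma as a consequence of the Picone inequality (Theorem~\ref{PICONE}); your D\'iaz--Sa\'a argument with the test functions $(v^2-u^2)_+/u$ and $(v^2-u^2)_+/v$ is exactly that standard route. The pointwise inequality you isolate as the crux does hold (a short case split on which of the two positive parts is active---the fully active case reduces to the known discrete D\'iaz--Sa\'a identity, the mixed cases are one-line sign checks), so your plan is correct and matches the referenced approach; just note that the appeal to Lemma~\ref{lmreg}/Theorem~\ref{threg} for $u,v\in L^\infty$ is not literally available under the bare hypotheses of the lemma and should be replaced by a truncation step in the regularization.
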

The proof of this result is a slight modification of the proof of Theorem 20 in
\cite{lenoperal}.
 Finally, we will use the following compactness lemma to get strong
convergence in the space $\H^s$.
\begin{lemma}\label{lmcp}
 Let $\{v_n\}_n$ be a sequence of non-negative functions such that $\{v_n\}_n$
 is bounded in $\H^s$, $v_n \rightharpoonup v$ in $\H^s$ and $v_n \leq v$. Assume that $(-\Delta)^s v_n \geq0$ then,
   $v_n \rightarrow v$ strongly in $\H^s$.
\end{lemma}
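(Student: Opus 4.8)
The plan is to use the Hilbert space structure of $\H^s$: since $v_n\rightharpoonup v$ already, to upgrade this to strong convergence it suffices to control the squared distance $\|v_n-v\|^2$ and show it tends to $0$. Set $w_n:=v-v_n$. The hypotheses give $w_n\ge 0$ a.e., while $v,v_n\in\H^s$ yield $w_n\in\H^s$ with $w_n\equiv0$ on $\Sigma_1$; hence $w_n$ is an admissible \emph{non-negative} test function, and this is the object I would feed into the weak formulation.

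First I would expand, using the bilinearity of the scalar product,
$$\|v_n-v\|^2=\langle v_n-v,\,v_n-v\rangle=\langle v_n,\,v_n-v\rangle-\langle v,\,v_n-v\rangle=-\langle v_n,\,w_n\rangle-\langle v,\,v_n-v\rangle.$$
The second summand tends to $0$: from $v_n\rightharpoonup v$ we get $v_n-v\rightharpoonup 0$ in $\H^s$, and testing this weak convergence against the fixed element $v$ gives $\langle v,\,v_n-v\rangle\to0$. For the first summand I would read the assumption $(-\Delta)^s v_n\ge0$ in the weak (energy) sense, i.e. as the statement that each $v_n$ is a supersolution of the homogeneous problem: for every non-negative $\varphi\in\H^s$ one has $\langle v_n,\varphi\rangle\ge0$, which is exactly the supersolution inequality attached to (\ref{eng sol}) up to the positive normalization constant. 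Applying it with $\varphi=w_n\ge0$ gives $\langle v_n,w_n\rangle\ge0$, so $-\langle v_n,w_n\rangle\le0$. Putting the two pieces together yields $\limsup_n\|v_n-v\|^2\le0$, whence $\|v_n-v\|\to0$, which is the desired strong convergence.

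The step I expect to be the genuine obstacle is the sign of $\langle v_n,w_n\rangle$, and the point is to resist splitting it. If one instead integrates by parts through Proposition \ref{BYP}, this quantity becomes $\int_{\O}w_n\,(-\Delta)^s v_n\,dx+\int_{\Sigma_2}w_n\,\mathcal{N}_s v_n\,dx$; the bulk integral is manifestly non-negative because $w_n\ge0$ and $(-\Delta)^s v_n\ge0$, but the sign of the nonlocal Neumann term $\int_{\Sigma_2}w_n\,\mathcal{N}_s v_n\,dx$ is not transparent and does not seem controllable on its own. The resolution is to keep the bulk and exterior contributions bundled inside the single scalar product $\langle v_n,w_n\rangle$ and to invoke the supersolution property directly on $\H^s$, so that no separate estimate on $\Sigma_2$ is ever required; this is precisely what makes the nonlocal Neumann datum harmless in the argument.
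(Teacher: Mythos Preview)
Your proof is correct and follows essentially the paper's approach: both rest on the single inequality $\langle v_n,\,v-v_n\rangle\ge0$ (the supersolution property tested against $w_n=v-v_n\ge0$) combined with the weak convergence $v_n\rightharpoonup v$. The only difference is organizational---the paper first deduces $\limsup_n\|v_n\|\le\|v\|$ via Young's inequality and then expands $\|v_n-v\|^2$, while you expand $\|v_n-v\|^2$ directly and skip that detour; your closing remark about keeping the $\Sigma_2$ contribution bundled inside the scalar product is in fact more careful than the paper's own handling of that boundary term.
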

\begin{proof}

 Since $v_n \leq v$, then using the fact that $(-\Delta)^s v_n \geq 0$, it follows that
$$\begin{array}{ccc}
  \Int_{\O}(-\Delta)^s v_n (v-v_n)\,dx&\geq & 0. \\
\end{array}
$$
Hence
$$\begin{array}{ccc}
  \Int_{\O}(-\Delta)^s v_n v\,dx&\geq & \Int_{\O}(-\Delta)^s v_n v_n\,dx. \\
\end{array}$$
Now, using Young's inequality, we obtain that
$$
\begin{array}{ccc}
  \Int\Int_{\mathcal{D}_{\O}}\frac{
\big(v_n(x)-v_n(y)\big)^{2}}{|x-y|^{N+2s}}\,dx\,dy & \leq &
\Int\Int_{\mathcal{D}_{\O}}\frac{\big(v(x)-v(y)\big)^{2}}{|x-y|^{N+2s}}\,dx\,dy. \\
\end{array}
$$
Thus
$$\begin{array}{ccc}
  \limsup\limits_{n\rightarrow \infty} \|v_n\| & \leq & \|v\|. \\
\end{array}
$$
Since
\begin{eqnarray*}
\limsup_{n\rightarrow \infty} \|v_n-v\|^2&=&
\limsup_{n\rightarrow \infty} (\|v_n\|^2+\|v\|^2-2\langle v_n,v\rangle)\\
&\leq& 2\|v\|^2-2
\limsup_{n\rightarrow \infty} \langle v_n,v\rangle,
\end{eqnarray*}
taking into consideration that $v_n \rightharpoonup v$ in $\H^s$, we get
$$\limsup_{n\rightarrow \infty} \|v_n-v\|^2=0.$$
As a consequence, $v_n\rightarrow v$ strongly in $\H^s.$
\end{proof}

\section{Proof of Theorem \ref{th1}}

 In this section we prove Theorem \ref{th1}. We split the proof into several auxiliary Lemmas.
 Let us begin by proving an existence result.
\begin{lemma}\label{lm00}
Assume that $0<q<1<p$, then problem ($P_{\lambda}$) has a nontrivial bounded solution at least for $\l>0$ small.
\end{lemma}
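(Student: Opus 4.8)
The plan is to build an ordered pair of sub- and supersolutions and then invoke the monotone iteration scheme of Lemma~\ref{IT6}, so that the resulting solution inherits positivity from the subsolution and boundedness from the supersolution.

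First I would construct the supersolution from the torsion-type function. Let $z\in\H^{s}$ be the (unique, by Lax--Milgram) energy solution of the linear problem $(-\Delta)^s z=1$ in $\Omega$, $\mathcal{B}_s z=0$ in $\RR^N\backslash\Omega$. Since the datum is bounded, Theorem~\ref{threg} (or Lemma~\ref{lmreg}) gives $z\in L^\infty(\Omega)$, and the maximum principle gives $z>0$ in $\Omega$; set $M:=\|z\|_{L^\infty(\Omega)}$. For $t>0$ put $\overline u:=tz$. Testing against nonnegative $\varphi\in\H^s$, $\overline u$ is a supersolution as soon as $t\ge \lambda (tz)^q+(tz)^p$ pointwise in $\Omega$, and since $0\le z\le M$ it suffices that $\lambda t^{q-1}M^q+t^{p-1}M^p\le 1$. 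As $p>1$, I first fix $t>0$ so small that $t^{p-1}M^p\le \frac12$; then, as $q<1$, I choose $\lambda>0$ small (depending on this $t$) so that $\lambda t^{q-1}M^q\le\frac12$. For such $\lambda$, $\overline u$ is a bounded positive supersolution.

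Next I would take the subsolution to be a small multiple of the first eigenfunction. Let $\lambda_1:=\lambda_1(\Omega)>0$ be the constant of Proposition~\ref{norm eqv} and $\phi_1\in\H^s$ a positive eigenfunction, $(-\Delta)^s\phi_1=\lambda_1\phi_1$ in $\Omega$, $\mathcal{B}_s\phi_1=0$; its existence follows from the compact embedding $\H^s\hookrightarrow L^2(\Omega)$ coming from Corollary~\ref{COROSO}, and since its right-hand side $\lambda_1\phi_1$ meets the growth condition with exponent $1$, Theorem~\ref{threg} yields $\phi_1\in L^\infty(\Omega)$; set $M_1:=\|\phi_1\|_{L^\infty(\Omega)}$. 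For $\epsilon>0$ put $\underline u:=\epsilon\phi_1$. Discarding the nonnegative superlinear term, $\underline u$ is a subsolution provided $\epsilon\lambda_1\phi_1\le \lambda\epsilon^q\phi_1^q$ pointwise, that is $\lambda_1\epsilon^{1-q}\phi_1^{1-q}\le\lambda$; because $1-q>0$ and $\phi_1\le M_1$, this holds for all $\epsilon$ small. For the ordering $\underline u\le\overline u$ I would avoid boundary (Hopf) estimates and argue by linear comparison: with $C_0:=\lambda_1 M_1$ one has $(-\Delta)^s(C_0 z-\phi_1)=C_0-\lambda_1\phi_1\ge 0$ in $\Omega$ and $\mathcal{B}_s(C_0 z-\phi_1)=0$, so the maximum principle gives $\phi_1\le C_0 z$. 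Hence $\underline u=\epsilon\phi_1\le\epsilon C_0 z\le tz=\overline u$ whenever $\epsilon\le t/C_0$, and I simply take $\epsilon$ small enough to meet this and the subsolution constraint simultaneously. Then Lemma~\ref{IT6} produces an energy solution $u$ with $\epsilon\phi_1\le u\le tz$; thus $u>0$ in $\Omega$ (so $u$ is nontrivial and positive) and $u\le tM$ is bounded, which is the claim.

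The hard part will be making the subsolution inequality hold all the way up to the Dirichlet part $\Sigma_1$, where $\phi_1$ vanishes: this is precisely why the eigenfunction, which vanishes at the correct $s$-boundary rate, must be used rather than a multiple of the torsion function $z$ (a multiple $\delta z$ fails the subsolution test near $\Sigma_1$, where $(-\Delta)^s(\delta z)=\delta>0$ while $\lambda(\delta z)^q+(\delta z)^p\to 0$), and it is also where the sign condition $1-q>0$ is essential, since it forces $\phi_1^{1-q}$ to be small near $\Sigma_1$. The only other delicate point, the ordering $\underline u\le\overline u$, is dispatched cleanly by the linear comparison $\phi_1\le C_0 z$ above.
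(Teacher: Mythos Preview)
Your argument is correct and follows the same overall template as the paper---an ordered sub/supersolution pair plus monotone iteration---with the identical supersolution $t\mathcal V$ built from the torsion function. The genuine difference is in the subsolution and in the ordering step. The paper takes as subsolution $z_\lambda=\lambda^{1/(1-q)}z$, where $z$ is the unique positive solution of the pure concave problem $(-\Delta)^s z=z^q$ with the same mixed data, and then obtains the ordering $z_\lambda\le v_1$ directly from the Brezis--Kamin comparison principle (Lemma~\ref{lmbk}), since $v_1$ is a supersolution of $(-\Delta)^s u=\lambda u^q$. Your route instead uses $\epsilon\phi_1$ and replaces Brezis--Kamin by the elementary linear comparison $\phi_1\le C_0 z$. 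What your approach buys is that it avoids solving the auxiliary nonlinear concave problem and does not invoke Lemma~\ref{lmbk} at all; what the paper's choice buys is that the object $z_\lambda$ is reused verbatim in the subsequent lemmas (Lemmas~\ref{lm1} and~\ref{POINT} and the proof that $S$ is an interval), so constructing it here is an investment that pays off later. Your closing remarks about boundary rates are not actually needed for your own argument: the subsolution inequality $\lambda_1(\epsilon\phi_1)^{1-q}\le\lambda$ already follows from the mere boundedness of $\phi_1$, with no information on how $\phi_1$ decays toward $\Sigma_1$.
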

\begin{proof}
The main idea is to show that for $\l$ small, the problem ($P_{\lambda}$) has a comparable bounded sub and supersolution.
Let $\mathcal{V}$ be the unique positive solution to the problem
\begin{equation*}
\left\{
\begin{array}{rcll}
(-\Delta)^s \mathcal{V} & = & 1 & {\mbox{ in }}\O,\\
   \mathcal{V} & > & 0 &{\mbox{ in }} \O, \\
   \mathcal{B}_{s}\mathcal{V} & = & 0 &{\mbox{ in }} \RR^{N}\backslash \O.
\end{array}\right.
\end{equation*}
Notice that the existence of $\mathcal{V}$ follows by
using the Lax-Milgram theorem in the space $\H^s$, however the positivity of $\mathcal{V}$ follows form \cite{barrios}. It is
clear that $\mathcal{V}\in \mathcal{C}^\alpha(\bar{\O})$ for some $\alpha<1$. Let $C=\|
\mathcal{V}\|_\infty$, it is not difficult to show the existence of $\l^*>0$ such that for all $\l<\l^*$, the inequality
$$
M\ge \l M^q C^q+M^pC^p,
$$
has a solution $M>0$. Fix $\l, M$ as above and define $v_1=M\mathcal{V}$, then $v_1$ solves
\begin{equation}\label{exit0}
\left\{
\begin{array}{rcll}
(-\Delta)^s v_1 & = & M\ge \l v^q_1+v^p_1 & {\mbox{ in }}\O,\\
   v_1 & > & 0 &{\mbox{ in }} \O, \\
   \mathcal{B}_{s} v_1 & = & 0 &{\mbox{ in }} \RR^{N}\backslash \O.
\end{array}\right.
\end{equation}
Thus $v_1$ is a supersolution to problem ($P_{\lambda}$).

We consider now the following problem
\begin{equation}\label{pbcnv}
\left\{
\begin{array}{rcll}
(-\Delta)^s z & = & z^{q}& {\mbox{ in }}\O,\\
   z & > & 0 &{\mbox{ in }} \O, \\
   \mathcal{B}_{s}z & = & 0 &{\mbox{ in }} \RR^{N}\backslash \O.
\end{array}\right.
\end{equation}
Since $q\in (0,1)$, then setting
$$M=\min\left\{ \frac{1}{2}\|w\|^2-\frac{\l}{q+1}\Int_{\O}w_+^{q+1}dx,\, \, w\in \H^s \right\},$$
it follows that $M$ is achieved by a minimizer $z$. It is clear 
that $z\ge0$,
then by Proposition \ref{strg max} and Lemma \ref{lmbk}, it follows that $z> 0$ and it is unique.
In particular, $z$ is the solution to problem (\ref{pbcnv}).
By Theorem \ref{threg}, it holds that $z\in L^{\infty}(\O)$.

Now setting $z_\l=\l^{\frac{1}{1-q}}z$, then $z_\l$ is a solution to
\begin{equation}\label{l11}
\left\{
\begin{array}{rcll}
(-\Delta)^s z_\l& = & \lambda z_\l^{q} & {\mbox{ in }}\O,\\
   \mathcal{B}_{s} z_\l & = & 0 &{\mbox{ in }} \RR^{N}\backslash \O.
\end{array}\right.
\end{equation}
By the comparison result in Lemma \ref{lmbk}, it holds that $z_\l\leq v_1$. 
It is clear that $z_\l$ is
a subsolution to problem ($P_{\lambda}$). Hence a monotonicity argument allows us
to get the existence of a solution $u_\l$ to problem ($P_{\lambda}$) with $z_\l\le u_\l\le v_1$.
\end{proof}

\begin{lemma}\label{lm1}
Let $\Lambda$ be defined by
$$\Lambda = \sup\left\{ \lambda > 0: \, \, \text{problem ($P_{\lambda}$) has a solution }  \right\}.$$
Then $0<\Lambda< \infty$.
\end{lemma}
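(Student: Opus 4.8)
The plan is to establish the two bounds separately, the lower one being essentially a restatement of the existence lemma and the upper one requiring a spectral test-function argument.

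The bound $\Lambda>0$ is immediate from Lemma \ref{lm00}. Indeed, that lemma produces, for every $\l$ below the threshold $\l^*$ constructed there, a bounded positive solution of ($P_\l$). Hence the admissible set $\{\l>0:\ (P_\l)\text{ has a solution}\}$ contains the whole interval $(0,\l^*)$, is therefore nonempty, and $\Lambda\geq \l^*>0$.

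For $\Lambda<\infty$ I would argue by contradiction, testing a hypothetical positive solution against the first eigenfunction of the mixed problem. Let $\l_1=\l_1(\O)>0$ be the first eigenvalue introduced in the proof of Proposition \ref{norm eqv} (its positivity coming from \cite{barrios}), and let $\varphi_1\in\H^s$ be an associated eigenfunction: by compactness of the embedding $\H^s\hookrightarrow L^2(\O)$ the infimum defining $\l_1$ is attained, and after replacing the minimizer by its absolute value and invoking the positivity/maximum-principle machinery (Proposition \ref{strg max} and \cite{barrios}) one may take $\varphi_1>0$ in $\O$, solving in the weak sense $\langle\varphi_1,\psi\rangle=\l_1\int_\O\varphi_1\psi\,dx$ for all $\psi\in\H^s$. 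Now, if $u$ is a positive solution of ($P_\l$), I would test the weak formulation of ($P_\l$) with $\varphi_1$ and the eigenvalue equation with $u$, and exploit the symmetry of the bilinear form to obtain
$$
\int_\O \varphi_1\,(\l u^q+u^p)\,dx=\langle u,\varphi_1\rangle=\langle\varphi_1,u\rangle=\l_1\int_\O\varphi_1 u\,dx,
$$
so that $\int_\O \varphi_1\,(\l u^q+u^p-\l_1 u)\,dx=0$. The Neumann contributions on $\Sigma_2$ do not appear here, since both $u$ and $\varphi_1$ belong to $\H^s$ and the energy formulation already encodes $\mathcal{N}_s=0$ on $\Sigma_2$; this is where Proposition \ref{BYP} enters to guarantee the identity.

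The contradiction then comes from an elementary one-variable estimate. Setting $g_\l(t)=\l t^q+t^p$ for $t>0$, I would minimize $g_\l(t)/t=\l t^{q-1}+t^{p-1}$; since $0<q<1<p$ there is a unique interior critical point $t^\ast=\big(\l(1-q)/(p-1)\big)^{1/(p-q)}$, and a direct computation yields $\min_{t>0} g_\l(t)/t=C_0\,\l^{(p-1)/(p-q)}$ with $C_0>0$, which tends to $+\infty$ as $\l\to\infty$. Consequently there is a finite $\Lambda_0$ such that $\l t^q+t^p>\l_1 t$ for all $t>0$ whenever $\l>\Lambda_0$. For such $\l$ the integrand $\varphi_1\,(\l u^q+u^p-\l_1 u)$ is strictly positive a.e.\ in $\O$ (recall $u>0$ and $\varphi_1>0$), forcing the integral to be strictly positive and contradicting the identity above. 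Hence ($P_\l$) admits no positive solution for $\l>\Lambda_0$, which gives $\Lambda\le\Lambda_0<\infty$.

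The \emph{main obstacle} is the spectral setup rather than the final inequality: one must produce the positive first eigenfunction $\varphi_1$ in the mixed-boundary space $\H^s$ and carefully justify that the $\Sigma_2$-boundary terms cancel when the two weak identities are combined. I expect the existence and strict positivity of $\varphi_1$ (relying on the compact embedding, Proposition \ref{strg max} and \cite{barrios}) to require the most care, whereas the decisive scaling estimate $\min_{t>0} g_\l(t)/t\to\infty$ as $\l\to\infty$ is routine.
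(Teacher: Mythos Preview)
Your proof is correct but follows a genuinely different route from the paper for the bound $\Lambda<\infty$.

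The paper does not test against a first eigenfunction. Instead, it uses the Picone inequality (Theorem~\ref{PICONE}) with an arbitrary $\phi\in\H^s$ against the solution $\bar{u}_\lambda$, together with the pointwise lower bound $\bar{u}_\lambda\ge z_\lambda=\lambda^{1/(1-q)}z$ (the latter coming from Lemma~\ref{lmbk}). This yields
\[
\|\phi\|^2\ \ge\ \int_\Omega\frac{\phi^2}{\bar u_\lambda}(-\Delta)^s\bar u_\lambda
\ \ge\ \int_\Omega z_\lambda^{\,p-1}\phi^2
\ =\ \lambda^{\frac{p-1}{1-q}}\int_\Omega z^{\,p-1}\phi^2,
\]
so that $\lambda^{(p-1)/(1-q)}$ is bounded above by the weighted first eigenvalue with weight $z^{p-1}$. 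Your argument, by contrast, is the classical Ambrosetti--Brezis--Cerami test: pair both equations against the positive principal eigenfunction $\varphi_1$ and exploit the elementary scalar inequality $\lambda t^q+t^p>\lambda_1 t$ for $\lambda$ large. The paper's route avoids having to establish existence and strict positivity of $\varphi_1$ in the mixed-boundary space (which, as you rightly flag, needs the compact embedding plus Proposition~\ref{strg max} or the Hopf-type result in \cite{barrios}); in exchange it uses two tools you do not need, namely Picone's inequality and the auxiliary concave solution $z$. Your approach is more self-contained once the spectral input is granted, and gives a cleaner qualitative bound, while the paper's approach produces an explicit upper bound for $\Lambda$ in terms of the $z^{p-1}$-weighted eigenvalue.
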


\begin{proof}
By Lemma \ref{lm00}, we reach that $\Lambda>0$.

We show now that $\Lambda<\infty$. Let $\lambda$ be such that problem ($P_{\lambda}$)
has a solution $\bar{u}_{\lambda}$. By the comparison principle in Lemma \ref{lmbk},
we get $z_\l\leq\bar{u}_{\lambda}$ where $z_\l$ is the unique positive solution
to problem \eqref{l11}. Let $\phi\in \H^s$, then using Picone's inequality we obtain that
$$
\begin{matrix}
    \Int\Int_{\mathcal{D}_{\O}}\frac{\big(\phi(x)-\phi(y)\big)^{2}}{|x-y|^{N+2s}}\,dx\,dy &\geq&
    \Int_{\O}\frac{\phi^2}{\bar{u}_{\lambda}}(-\Delta)^{s}\bar{u}_{\lambda}\,dx\\
     &\geq& \Int_{\O}\phi^2 (\l \bar{u}^{q-1}_{\l}+\bar{u}^{p-1}_{\l})\, dx \\
         & \geq & \Int_{\O}z_\l^{p-1} \phi^{2}\,dx  \\
     & \geq &  \lambda^{\frac{p-1}{1-q}}\Int_{\O}z^{p-1}\phi^{2}\,dx.\\
\end{matrix}
$$
Hence
\begin{equation}\label{}
  \lambda^{\frac{p-1}{1-q}}\leq \inf_{\phi\in \H^s }\frac{\Int\Int_{\mathcal{D}_{\O}}\frac{\big(\phi(x)-\phi(y)\big)^{2}}
  {|x-y|^{N+2s}}\,dx\,dy}{\Int_{\O}z^{p-1}\phi^{2}\,dx}=\Lambda^{*}.
\end{equation}
Consequently, $\Lambda \leq \left(\Lambda^{*}\right)^{\frac{1-q}{p-1}}< \infty$. This gives point (2) in Theorem \ref{th1}.
\end{proof}

We show now that for all $0<\l<\Lambda$, problem ($P_\l$) has a solution. This will be a consequence of the following lemma.
\begin{lemma}
Let
\begin{equation}\label{INTERVAL}
S=\left\{ \lambda > 0: \, \, \text{problem ($P_{\lambda}$)
has a solution}\right\}.
\end{equation}
Then $S$ is an interval.
\end{lemma}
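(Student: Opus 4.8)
The plan is to prove that $S$ is downward closed: if $(P_{\lambda_2})$ admits a solution and $0<\lambda_1<\lambda_2$, then $(P_{\lambda_1})$ admits a solution as well. Together with Lemma~\ref{lm1}, which gives $\Lambda=\sup S\in(0,\infty)$ and $0\in\overline{S}$, this forces $S$ to be an interval with left endpoint $0$. So everything reduces to the downward-closedness step, which I would carry out by the sub/supersolution method already used at the end of the proof of Lemma~\ref{lm00}.

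First I would fix a solution $u_{\lambda_2}$ of $(P_{\lambda_2})$ and observe that it is a supersolution of $(P_{\lambda_1})$. Indeed, since $u_{\lambda_2}>0$ in $\O$ and $\lambda_2>\lambda_1$,
$$
(-\Delta)^s u_{\lambda_2}=\lambda_2\,u_{\lambda_2}^q+u_{\lambda_2}^p\ge \lambda_1\,u_{\lambda_2}^q+u_{\lambda_2}^p
\quad\text{in }\O,
$$
with the mixed exterior datum still satisfied. For the subsolution I would take $z_{\lambda_1}=\lambda_1^{\frac{1}{1-q}}z$, the function solving \eqref{l11} with parameter $\lambda_1$; it is a subsolution of $(P_{\lambda_1})$ because
$$
(-\Delta)^s z_{\lambda_1}=\lambda_1\,z_{\lambda_1}^q\le \lambda_1\,z_{\lambda_1}^q+z_{\lambda_1}^p
\quad\text{in }\O .
$$

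The heart of the argument is the ordering $z_{\lambda_1}\le u_{\lambda_2}$, needed to make the monotone iteration applicable. Here I would discard the nonnegative convex term and view $u_{\lambda_2}$ as a supersolution of the purely concave problem $(-\Delta)^s u=\lambda_1 u^q$ in $\O$, while $z_{\lambda_1}$ is an (exact) solution of the same problem. Then I would invoke the Brezis--Kamin comparison principle, Lemma~\ref{lmbk}, with $f(x,\sigma)=\lambda_1\sigma^q$: since $q<1$, the quotient $f(x,\sigma)/\sigma=\lambda_1\sigma^{q-1}$ is decreasing in $\sigma$ uniformly in $x$, so the hypotheses of Lemma~\ref{lmbk} are met and we conclude $u_{\lambda_2}\ge z_{\lambda_1}$ in $\O$. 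This comparison is the main obstacle, as it is exactly where the concavity $q<1$ and the positivity of both functions are essential; the convex term $u^p$ is harmless only because it may be dropped from the supersolution inequality.

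Finally, having an ordered couple $z_{\lambda_1}\le u_{\lambda_2}$ of a positive subsolution and a supersolution of $(P_{\lambda_1})$, I would apply the monotone iteration scheme (Lemma~\ref{IT6}, with the nonlinearity $\lambda_1\sigma^q+\sigma^p$ as in Lemma~\ref{lm00}) to produce a solution $u_{\lambda_1}$ with $z_{\lambda_1}\le u_{\lambda_1}\le u_{\lambda_2}$. Since $z_{\lambda_1}>0$, this solution is strictly positive, so $\lambda_1\in S$. This establishes downward-closedness and hence that $S$ is an interval.
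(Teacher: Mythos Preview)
Your proposal is correct and follows essentially the same approach as the paper: both prove downward-closedness by taking a solution at the larger parameter as supersolution, the rescaled concave solution $z_\lambda=\lambda^{\frac{1}{1-q}}z$ as subsolution, invoking the Brezis--Kamin comparison (Lemma~\ref{lmbk}) to order them, and then running the monotone iteration. The only difference is notational (the paper swaps the roles of $\lambda_1$ and $\lambda_2$).
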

\begin{proof}
Notice that $S\neq \emptyset$, thanks to Lemma~\ref{lm00}. Let $\l_1\in S$ be fixed, we have just to prove
that for all $0<\l_2<\l_1$, problem $(P_{\lambda_2})$ has a non trivial solution.

Since $\l_1\in S$, then we get the existence of $u_1\in \H^{s}$ such that $u_1$ solves $(P_{\lambda_1})$.
It is clear that $u_1$ is a supersolution to problem $(P_{\lambda_2})$. Recall that $z$ is the unique solution
to problem \eqref{pbcnv}. Setting $z_2=\l_2^{\frac{1}{1-q}}z$, then $z_2$ solves
\begin{equation*}\label{zll}
\left\{
\begin{array}{rcll}
(-\Delta)^s z_2 & = & \lambda_2 z_2^{q} & {\mbox{ in }}\O,\\
   \mathcal{B}_{s}z_2 & = & 0 &{\mbox{ in }} \RR^{N}\backslash \O.
\end{array}\right.
\end{equation*}
By the comparison principle in Lemma \ref{lmbk}, it holds that $z_2\le u_1$.

Since $z_2$ is a subsolution to problem $(P_{\lambda_2})$, then using a monotonicity argument we
get the existence of $u_2\in \H^{s}$ such that $z_2\le u_2\le u_1$ and $u_2$
solves problem  $(P_{\lambda_2})$. Thus $\l_2\in S$ and the result follows.
\end{proof}

 We now prove that ($P_\l$) possesses a minimal solution and we give some energy properties of such solutions.
\begin{lemma} \label{POINT}
For all $0<\lambda<\Lambda$, problem ($P_{\lambda}$) has a minimal solution $u_{\lambda}$ such that
$J_{\l}(u_{\l})<0$. Moreover the family ${u_{\lambda}}$ of minimal
solutions is increasing with respect to $\lambda$.
\end{lemma}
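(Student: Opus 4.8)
The plan is to obtain the minimal solution by monotone iteration, to pin down the sign of the energy by a constrained minimization, and to deduce the monotonicity in $\l$ from the comparison principle.

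First I would fix $\l\in(0,\Lambda)$. Since $S$ is an interval and $\l<\Lambda=\sup S$, we have $\l\in S$, so $(P_\l)$ has a solution $\overline w$, which is in particular a supersolution. On the other hand $z_\l=\l^{1/(1-q)}z$ is a subsolution of $(P_\l)$, and by the Brezis--Kamin comparison (Lemma \ref{lmbk}, applied with $f(x,\sigma)=\l\sigma^q$, whose quotient $\l\sigma^{q-1}$ is decreasing because $q<1$) one gets $z_\l\le\overline w$. Starting the monotone scheme $(-\Delta)^s u_{n+1}=\l u_n^{q}+u_n^{p}$ in $\O$, $\mathcal B_s u_{n+1}=0$ in $\RR^N\setminus\O$, from $u_0=z_\l$, the monotonicity of $\sigma\mapsto\l\sigma^q+\sigma^p$ on $[0,\infty)$ together with Lemma \ref{IT6} produces an increasing sequence trapped between $z_\l$ and $\overline w$; passing to the limit with the help of the $L^\infty$ bounds and Lemma \ref{lmcp} yields a solution $u_\l$. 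This $u_\l$ is minimal: any solution $w$ of $(P_\l)$ is a supersolution of the purely concave problem, so $w\ge z_\l$ by Lemma \ref{lmbk}, and the same induction gives $u_n\le w$ for every $n$, whence $u_\l\le w$.

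Next, to prove $J_\l(u_\l)<0$, I would minimize $J_\l$ over the order interval $\mathcal K=\{v\in\H^s:\,0\le v\le u_\l\}$. On $\mathcal K$ the lower order terms are controlled by $\|u_\l\|_\infty$, so $J_\l(v)\ge\frac12\|v\|^2-C$ is coercive, and it is weakly lower semicontinuous there: the compact embedding $\H^s\hookrightarrow L^2(\O)$ together with dominated convergence (using $0\le v\le u_\l\in L^\infty$) handles the $L^{q+1}$ and $L^{p+1}$ terms even when $p$ is supercritical, which is precisely why the constraint is imposed. Hence a minimizer $m\in\mathcal K$ exists. A cut-and-paste argument with the subsolution $z_\l$ shows $m\ge z_\l>0$, and the standard sub/supersolution reasoning shows that, lying between $z_\l$ and the supersolution $u_\l$, the minimizer is a free critical point, i.e. a solution of $(P_\l)$ with $m\le u_\l$. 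By minimality of $u_\l$ we also have $u_\l\le m$, so $m=u_\l$. Therefore $u_\l$ minimizes $J_\l$ over $\mathcal K$, and testing against the admissible competitor $\e u_\l\in\mathcal K$ gives
$$J_\l(u_\l)\le J_\l(\e u_\l)=\frac{\e^2}{2}\|u_\l\|^2-\frac{\l\,\e^{q+1}}{q+1}\|u_\l\|_{q+1}^{q+1}-\frac{\e^{p+1}}{p+1}\|u_\l\|_{p+1}^{p+1},$$
which is negative for $\e$ small, since $q+1<2<p+1$ makes the concave term dominate as $\e\to0^+$.

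Finally, for the monotonicity, take $\l_1<\l_2<\Lambda$. As $u_{\l_2}>0$ solves $(P_{\l_2})$ and $\l_1<\l_2$, it is a supersolution of $(P_{\l_1})$ and, by Lemma \ref{lmbk} again, dominates $z_{\l_1}$; running the iteration for $(P_{\l_1})$ from $z_{\l_1}$ underneath $u_{\l_2}$ gives $u_{\l_1}\le u_{\l_2}$, and the strict ordering in $\O$ then follows from the strong maximum principle of Proposition \ref{strg max}. The step I expect to be the main obstacle is the middle one: verifying rigorously that the constrained minimizer $m$ is a genuine solution (that both obstacles $0$ and $u_\l$ are inactive) and strictly positive, which rests on the truncation and cut-and-paste arguments together with the positivity input from \cite{barrios} and Proposition \ref{strg max}; the coercivity and weak lower semicontinuity on $\mathcal K$ are exactly what allow the whole scheme to run with no subcriticality hypothesis on $p$.
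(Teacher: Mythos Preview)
Your construction of the minimal solution and the monotonicity in~$\lambda$ follow the paper essentially verbatim: monotone iteration from~$z_\lambda$ below any given solution, strong convergence via Lemma~\ref{lmcp}, and comparison through Lemma~\ref{lmbk} together with Proposition~\ref{strg max}.

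Where you genuinely diverge from the paper is in proving $J_\lambda(u_\lambda)<0$. The paper argues via \emph{second-order} information: at the minimal solution the linearized eigenvalue satisfies $\mu_1\ge 0$ (this is the ABC-type fact referenced from \cite{ABC}), which gives
\[
\|u_\lambda\|^2 \ \ge\ \lambda q\,\|u_\lambda\|_{q+1}^{q+1}+p\,\|u_\lambda\|_{p+1}^{p+1};
\]
combined with the identity $\|u_\lambda\|^2=\lambda\|u_\lambda\|_{q+1}^{q+1}+\|u_\lambda\|_{p+1}^{p+1}$ obtained by testing the equation with $u_\lambda$, a short algebraic manipulation yields $J_\lambda(u_\lambda)<0$. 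Your route is instead \emph{first-order}: you minimize $J_\lambda$ over the order interval $[0,u_\lambda]$, identify the minimizer with $u_\lambda$ by minimality, and then beat $0$ with the competitor $\varepsilon u_\lambda$. This is correct and in fact reuses exactly the Struwe-type machinery that the paper later deploys in Section~4 (minimization over $M=\{0\le u\le\bar u\}$ and \cite[Theorem~2.4]{stw}); it is arguably more self-contained since it avoids importing the $\mu_1\ge 0$ argument from \cite{ABC}. Conversely, the paper's approach yields the extra structural information that the minimal solution is stable, which is of independent interest.

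One small correction: in your dominated-convergence step you invoke $u_\lambda\in L^\infty$, but that is not available for supercritical~$p$. What you actually need, and what you do have, is $u_\lambda\in L^{p+1}(\Omega)$: testing the equation with $u_\lambda$ gives $\|u_\lambda\|_{p+1}^{p+1}\le\|u_\lambda\|^2<\infty$, and then $0\le v_n^{p+1}\le u_\lambda^{p+1}\in L^1(\Omega)$ is the correct dominating function. With this adjustment your weak lower semicontinuity argument on~$\mathcal K$ goes through without any growth restriction on~$p$, as you intended.
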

\begin{proof}
Suppose that ($P_{\lambda}$) has a solution $v_\l$ for a given
$\lambda\in S$. Define the sequence $v_{n}$ by
$v_{0}=z_\l$,
\begin{equation}\label{aprox pb}
\left\{
\begin{array}{rcll}
(-\Delta)^s v_{n} & = & \lambda v_{n-1}^{q}+v_{n-1}^{p} & {\text{ in }}\O,\\
   v_{n} & \geq & 0 &{\text{ in }} \Omega, \\
   \mathcal{B}_{s}v_{n} & =& 0 &{\text{ in }} \RR^{N}\backslash \O,\\
\end{array}\right.
\end{equation}
where $z_\l$ is the unique solution to problem \eqref{l11}. By the comparison result in Lemma
\ref{lmbk}, we have that $\bar{z}\leq...\leq v_{n-1}\leq v_{n}\leq v_{\l}$ and then,
by Proposition \ref{strg max}, it follows that $z_\l<v_{n}< v_{\l}$.

So, using $v_n$ as a test function in (\ref{aprox pb}), we get $\|v_n\|\leq \|v_\l\|$.
Hence there exists $u_\l \in \H^s$ such that $v_n \rightharpoonup u_\l$.
Accordingly, since $(-\Delta)^sv_n \geq 0$, using Lemma \ref{lmcp}, we
conclude that $v_{n}\rightarrow u_{\l}$ strongly in $\H^s$ and $u_{\l}\leq v_{\l}$.
This shows that $u_{\l}$ is a minimal solution.

Then, by Lemma \ref{lmbk} and Proposition \ref{strg max},
we obtain the monotonicity of the family $\left\{u_{\l},\, \, \, \l \in (0,\Lambda)\right\}$.\\
 Henceforth, given $\l \in(0,\Lambda)$, we use the notation $u_{\l}$ for the minimal solution. Let us define $a(x)=\l
qu_{\l}^{q-1}+pu_{\l}^{p-1}$ and let $\mu_1$ be the first eigenvalue of the following the problem
\begin{equation}\label{}
\left\{
\begin{array}{rcll}
(-\Delta)^s \phi- a(x)\phi & = & \mu_1 \phi& {\text{ in }}\O,\\
   \phi & > & 0 &{\text{ in }} \Omega, \\
   \mathcal{B}_{s}\phi & = & 0 &{\text{ in }} \RR^{N}\backslash \O.\\
\end{array}\right.
\end{equation}
Using closely the same argument as in the proof of Lemma 3.5 in \cite{ABC}, we can prove that
\begin{equation}\label{mu12}\mu_1 \geq 0.\end{equation}

It is clear that \eqref{mu12} is equivalent to
\begin{equation}\label{mu}
\| \phi\|^2 \geq \Int_{\O}a(x)\phi^2 dx
    \quad \forall \phi \in \H^s.
\end{equation}
Since $u_\l$ is a solution to ($P_\l$), testing the equation
against $u_\l$ itself, we find that
\begin{equation}\label{mu00}
\|u_\l\|^2=\l \|u_\l\|^{q+1}_{q+1}+\|u_\l\|^{p+1}_{p+1}.
\end{equation}
By (\ref{mu}), it follows that
\begin{equation}\label{mu000}
\|u_\l\|^2-\l q \|u_\l\|^{q+1}_{q+1}-p\|u_\l\|^{p+1}_{p+1}\geq 0.
 \end{equation}
By inserting these relations into \eqref{JFU}, we obtain
that $J_\l(u_\l)<0$, as desired.
\end{proof}
This gives point (1) in Theorem \ref{th1}. Thus, to complete the proof of Theorem \ref{th1},
we can now focus on the proof of point (3). To this end, we have the following result:
\begin{lemma}\label{P3}
 Problem ($P_\l$) has at least one solution if $\l=\Lambda$.
\end{lemma}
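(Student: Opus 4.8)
The plan is to produce the solution at $\lambda=\Lambda$ as the monotone limit of the minimal solutions $u_\lambda$ constructed in Lemma \ref{POINT}. First I would fix an increasing sequence $\lambda_n\nearrow\Lambda$ and consider the associated minimal solutions $u_{\lambda_n}$. By the monotonicity proved in Lemma \ref{POINT}, the family $\{u_{\lambda_n}\}_n$ is non-decreasing, hence it admits a pointwise a.e.\ limit $u^{*}=\lim_n u_{\lambda_n}$ with $u^{*}\geq u_{\lambda_n}>0$ for every $n$. The whole strategy then reduces to showing that $u^{*}\in\H^{s}$ and that one may pass to the limit in the weak formulation \eqref{eng sol}.

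The crux of the argument is a uniform a priori bound on $\|u_{\lambda_n}\|$, independent of $n$, and this is exactly where the semi-stability of the minimal solutions is indispensable. Testing the equation against $u_{\lambda_n}$ gives the energy identity \eqref{mu00}, namely $\|u_{\lambda_n}\|^2=\lambda_n\|u_{\lambda_n}\|^{q+1}_{q+1}+\|u_{\lambda_n}\|^{p+1}_{p+1}$, while the stability inequality \eqref{mu000} (which encodes $\mu_1\geq0$ taken with $\phi=u_{\lambda_n}$) gives $\|u_{\lambda_n}\|^2\geq\lambda_n q\|u_{\lambda_n}\|^{q+1}_{q+1}+p\|u_{\lambda_n}\|^{p+1}_{p+1}$. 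Multiplying the identity by $p$ and subtracting the inequality cancels the strongest term $\|u_{\lambda_n}\|^{p+1}_{p+1}$ and leaves $(p-1)\|u_{\lambda_n}\|^2\leq\lambda_n(p-q)\|u_{\lambda_n}\|^{q+1}_{q+1}$. Since $q+1<2\leq 2^{*}_{s}$, Corollary \ref{COROSO} yields $\|u_{\lambda_n}\|^{q+1}_{q+1}\leq C\|u_{\lambda_n}\|^{q+1}$, so that $\|u_{\lambda_n}\|^{1-q}\leq \frac{C(p-q)\Lambda}{p-1}$, which is a bound uniform in $n$ precisely because $1-q>0$.

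With this bound secured, $u_{\lambda_n}\rightharpoonup u^{*}$ in $\H^{s}$ (the a.e.\ monotone limit forces convergence of the whole sequence). Since $(-\Delta)^{s}u_{\lambda_n}=\lambda_n u_{\lambda_n}^{q}+u_{\lambda_n}^{p}\geq0$ and $u_{\lambda_n}\leq u^{*}$, Lemma \ref{lmcp} upgrades this to strong convergence $u_{\lambda_n}\to u^{*}$ in $\H^{s}$, whence $u^{*}\in\H^{s}$. To pass to the limit in \eqref{eng sol}, the bilinear form on the left converges by strong convergence. For the right-hand side I would first test against an arbitrary $\varphi\geq0$ in $\H^{s}$: then $\lambda_n u_{\lambda_n}^{q}\varphi\nearrow\Lambda (u^{*})^{q}\varphi$ and $u_{\lambda_n}^{p}\varphi\nearrow(u^{*})^{p}\varphi$ monotonically, so the monotone convergence theorem applies, and the limits are finite because they equal the finite limit of the left-hand side. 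Decomposing a general test function as $\varphi=\varphi_{+}-\varphi_{-}$ then shows that $u^{*}$ solves \eqref{eng sol} with $\lambda=\Lambda$. I stress that this monotone-convergence step bypasses any Sobolev embedding for the term $u^{p}$, so it works for every $p>1$, including the supercritical range not excluded in Theorem \ref{th1}.

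Finally, positivity is immediate: $u^{*}\geq u_{\lambda_1}>0$ in $\Omega$, so $u^{*}$ is a positive energy solution of $(P_{\Lambda})$ and $\Lambda\in S$. The hard part will be the uniform estimate of the second paragraph: without the semi-stability inequality \eqref{mu000} there is nothing to prevent $\|u_{\lambda_n}\|$ from diverging as $\lambda_n\to\Lambda$, and it is exactly the cancellation of the supercritical norm $\|u_{\lambda_n}\|^{p+1}_{p+1}$ between the energy identity and the stability inequality that makes the estimate close.
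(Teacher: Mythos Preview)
Your proof is correct and follows essentially the same architecture as the paper: take an increasing sequence $\lambda_n\nearrow\Lambda$, obtain a uniform $\H^s$-bound on the minimal solutions $u_{\lambda_n}$, then use Lemma~\ref{lmcp} to upgrade weak to strong convergence and pass to the limit. The only difference is in the a~priori estimate: you combine the energy identity \eqref{mu00} with the semi-stability inequality \eqref{mu000} to cancel the $\|u_{\lambda_n}\|_{p+1}^{p+1}$ term, whereas the paper instead exploits the negative-energy property $J_{\lambda_n}(u_{\lambda_n})<0$ from Lemma~\ref{POINT} together with $J'_{\lambda_n}(u_{\lambda_n})=0$ to reach the same kind of inequality $(\tfrac12-\tfrac{1}{p+1})\|u_{\lambda_n}\|^2\le \lambda_n(\tfrac{1}{q+1}-\tfrac{1}{p+1})\|u_{\lambda_n}\|_{q+1}^{q+1}$. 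Both routes are standard and equivalent in strength; your version makes the role of stability more explicit, while the paper's version packages it through the sign of the energy.
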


\begin{proof}
Let $\left\{\l_{n}\right\}$ be a sequence such that $\l_{n}\nearrow\Lambda$. We denote by $u_{n}\equiv u_{\l_n}$ the minimal solution to problem
($P_{\l_{n}}$), then the sequence $\{u_n\}_n$ is increasing in $n$.  Since $J_{\l_{n}}(u_{n})<0$, we get
\begin{eqnarray*}
  0 & > & J_{\l}(u_n)-\frac{1}{p+1}J'_{\l}(u_n) \\
    &\geq & (\frac{1}{2}-\frac{1}{p+1})\|u_n\|^2 +\l(\frac{1}{p+1}-\frac{1}{q+1})\|u_n\|^{q+1}_{q+1}\\
    &\geq & (\frac{1}{2}-\frac{1}{p+1})\|u_n\|^2 -\l(\frac{1}{q+1}-\frac{1}{p+1})\|u_n\|^{q+1}.
\end{eqnarray*}

Then, it follows that $\{u_{n}\}$ is bounded in $\H^{s}$.
Accordingly, we have that $u_n\rightharpoonup u^*$ in $\H^s$, for some $u^*\in \H^s$.
Since $\{u_n\}_n$ is increasing in $n$, using the fact that $(-\Delta)^su_n \geq 0$, recalling Lemma \ref{lmcp},
we conclude that $u_n\rightarrow u^*$ strongly in $\H^{s}$.
As a consequence, $u^*$ is a solution of ($P_{\lambda}$) for $\lambda=\Lambda$.
\end{proof}

\begin{remark} {\rm If $p \leq 2^*_s-1$ then using Theorem \ref{threg},
we can easily prove that $u^* \in L^{\infty}(\O)$, that means that $u^*$ is a
regular extremal solution.}
\end{remark}
 In view of Lemma \ref{P3}, we obtain point (3) of
Theorem \ref{th1}. The proof of Theorem \ref{th1} is thus complete.

\section{Proof of Theorem \ref{th2}}

  In this section we prove the existence of a second positive solution to ($P_\l$).

Since $p<\frac{N+2s}{N-2s}$, we observe that problem ($P_{\lambda}$) has a variational structure, indeed it is
the Euler-Lagrange equation of the energy functional in \eqref{JFU}. We note
that $J_{\l}$ is well defined, it is differentiable on $\H^s$ and for any $\varphi \in \H^s$,
$$( J_{\l}^{'}(u),\, \varphi) = \langle u,\,\varphi\rangle-\l\Int_{\O}|u|^{q}\varphi\,dx-\Int_{\O}|u|^{p}\varphi\,dx. $$
Thus critical points of the functional $J_{\l}$ are solutions to ($P_{\l}$).

To prove Theorem \ref{th2}, we will use a mountain pass-type argument. The proof goes as follows.
As in the local case, we can prove that the problem has a second
positive solution for $\l$ small. This follows using the mountain
pass theorem. For this purpose it is essential to have a first
solution which is a local minimum in $\H^{s}$. Let
$$ f_{\l}(r)= \left\{
\begin{array}{lll}
\l r^q+r^p,& {\mbox{if }}r\ge 0,\\ &\\ 0,&  {\mbox{if }}r<0,
\end{array}
\right.
$$
and
$$ F_{\l}(u)=\int_0^u f_{\l}(r)\,dr.$$
We define the functional $J_{\l}(u)=\frac{1}{2}\|u\|^2-\displaystyle\int_{\O}F_{\l}(u)$. Critical points of $J_{\l}$
correspond to solutions of ($P_{\l}$). Define the set
$$A=\{\lambda>0 \,:\, J_{\lambda}\hbox{ has a local minimum } u_{0,\lambda} \}. $$
 It is clear that if $\lambda \in A$ and $w_{\lambda}$ is a minimum of $J_{\lambda}$
 in $\H^{s}$, then $v=0$ is a local minimum of the functional
\begin{equation}\label{eq:funtras}
\hat{J}_\l(v)=\dfrac 12\|v\|^2-\int_\O G_{\l}(v)dx,
\end{equation}
where
$$
G_{\l}(v)=\int_0^v g_{\l}(r)\,dr $$ and $$ g_{\l}(r)= \left\{
\begin{array}{lll}
\l\left((u_{0,\l}(x)+r)^{q}-u_{0,\l}(x)^{q}\right)+(u_{0,\l}(x)+r)^{p}-u_{0,\l}(x)^{p},&
{\mbox{ if }}
r\ge 0,\\ &\\ 0,&
{\mbox{ if }}
r<0.&
\end{array}
\right.
$$
We can see  that $\hat{J}_\l$ possesses the mountain pass geometry. Thus, let
$v_{0} \in \H^{s}$ be such that $\hat{J}_\l(v_{0})<0$ and define
$$ \Gamma =\left\{ \gamma :\,[ 0,1] \rightarrow \H^{s}\;\,\gamma (0)=0,\,\gamma (1)=v_0\right\}\hbox{ and }\, c=\inf_{\gamma
\in \Gamma } \max_{ t\in[0,1]}\Phi_\l\left( \gamma (t)\right).
$$
We have that $c\geq 0$ and since $p < 2_{s}^{*}-1$, then $\hat{J}_\l$
satisfies the Palais-Smale condition. If $c>0$, then using the
Ambrosetti-Rabinowitz theorem we reach a non trivial critical point.
If $c=0$, then we use the Ghoussoub-Preiss Theorem, see \cite{gp}.

As a consequence if we start with a local minimum of the functional $\hat{J}_\l$,
then we obtain a second critical point of $\hat{J}_\l$, and hence a
second solution to ($P_\l$).

 Next, to show that problem ($P_{\lambda}$) has a second solution for all $\l \in (0,\Lambda)$,
 we follow some arguments similar to those developed by Alama
 in \cite{alm} taking into consideration the nonlocal nature of the operator.

We prove first, using a variational formulation of the Perron's method, that the functional
has a constrained minimum and then that this minimum is a local minimum in the whole $\H^s$.
To this end, we use a truncation technique and some energy estimates.

 Fix $\l_0\in (0,\Lambda)$ and let $\l_0 < \bar{\l} <\Lambda$. Define
$u_0,\,\bar{u}$ to be the minimal solutions to problem
($P_{\lambda}$) with $\l=\l_0$ and $\l=\bar{\l}$ respectively. By
definition we obtain that $u_0 < \bar{u}$. Let us define
$$ M=\{u\in
\H^s:\,\,0\le u\le \bar{u}\}.$$
It is clear that $u_0\in M$
and that $M$ is a convex closed subset of $\H^s$. Since
$J_{\lambda_0}$ is bounded from below in $M$ and lower semi-continuous,
then we get the existence of $\vartheta \in M$ such
that $$ J_{\lambda_0}(\vartheta)=\inf_{u\in M}J_{\lambda_0}(u).$$
Let $v$ be the unique solution to
$$
\left\{
\begin{array}{rcll}
(-\Delta)^s u & = & \l_0 u^q & {\text{ in }}\Omega,\\
   u & > &  0 &{\text{ in }} \Omega, \\
   \mathcal{B}_{s}u & = & 0 &{\text{ in }} \RR^{N}\backslash \Omega \,.\end{array}
\right.
$$
We have that $J_{\l_0}(v)<0$, and then $\vartheta \neq 0$. As in Theorem 2.4 in \cite{stw},
page 17, we conclude that $\vartheta$ is a solution to problem ($P_{\l}$).

If $\vartheta\neq u_{0}$, then the proof of Theorem \ref{th2}
is complete.
Accordingly, we can assume that $\vartheta =u_{0}$. We show that
\begin{equation}\label{TBP}
{\mbox{$\vartheta$ is a local minimum of $J_{\lambda_0}$.}}\end{equation} For this,
we argue by contradiction, and we
assume that $\vartheta$ is not a local
minimum of $J_{\lambda_0}$. Then there exists a sequence
$\{v_n\}\subset \H^s$ such that  $\|v_n-\vartheta\|_{\H^s}\to 0$ as $n\to \infty$ and
\begin{equation}\label{PTT}
J_{\lambda_0}(v_n)<J_{\lambda_0}(\vartheta).\end{equation}
We define $w_n=(v_n -\bar{u})_{+}$ and
$u_n=\max\{0,\min\{v_n,\bar{u}\}\}$. It is clear that $u_n\in M$ and
$$
u_n(x)= \left\{
\begin{array}{lll}
& 0 & \mbox{  if  }v_n(x)\le 0,\\ & v_n(x) & \mbox{  if  }0\le
v_n(x)\le \bar{u}(x),\\&  \bar{u}(x) & \mbox{  if  }\bar{u}(x)\le
v_n(x).
\end{array}
\right.
$$
Thus $u_n=v_n^+ - w_n$. Let $T_n= \{x\in \O:\,u_n(x)=v_n(x)\}$
and $S_n= \text{supp}\,\,\,w_n \cap \O$. Notice that $\text{supp}\,\,\,v_n^+ \cap \O=T_n\cup S_n$.
 We claim that
\begin{equation}\label{IsN876}
{\mbox{$|S_n|\to 0$ as $n\to \infty$.}}\end{equation}
To this end, let $\e>0$,
\begin{eqnarray*}&& E_n=\{x\in \O:\,v_n(x)\ge
\bar{u}(x)>\vartheta(x)+\d\}
\\ \mbox{  and   }&&
F_n=\{x\in \O:\,v_n(x)\ge\bar{u}(x)\mbox{ and }\bar{u}(x)\le \vartheta(x)+\d\},\end{eqnarray*} where $\d$ has
to be suitably chosen. Since
\begin{eqnarray*} 0 & = & |\{x\in \O:\,
\bar{u}(x)< \vartheta(x)\}|=\left|
\bigcap_{j=1}^\infty \left\{x\in \O:\,
\bar{u}(x)\le \vartheta(x)+\frac{1}{j}\right\}\right|\\ & = & \lim_{j\to
\infty}\left|\left\{x\in \O:\, \bar{u}(x)\le \vartheta(x)+\frac{1}{j}\right\}\right|,
\end{eqnarray*} then we get the existence of a suitable $\d_0=
\frac{1}{j_0}$ such that if $\d<\d_0$, then $$|\{x\in \O:\,
\bar{u}(x)\le \vartheta(x)+\d\}|\le \frac{\e}{2}.$$ Thus $|F_n|\le
\frac{\e}{2}$. Since $\|u_n-v_0\|_{L^2(\O)}\to 0$ as $n\to \infty$,
we get that for $\eta=\frac{\d^2\e}{2}$, if $n\ge n_0$, we have that
$$
\frac{\d^2\e}{2}\ge\int_\Omega|v_n-\vartheta|^2dx\ge \int_{E_n}
|v_n-\vartheta|^2dx \ge \d^2|E_n|. $$ Hence $|E_n|\le \frac{\e}{2}$.
Since $S_n\subset F_n\cup E_n$, we conclude that $|S_n|\le \e$ for
$n\le n_0$ and then the claim in~\eqref{IsN876} follows.

Now we define
$$H(u)=\dfrac{\l_{0}}{q+1} u_+^{q+1}+\dfrac{u_{+}^{p+1}}{p+1}.$$ Using the fact that
$$
\|v_n\|^2\ge \|v^+_n\|^2 + \|v^-_n\|^2,
$$
we obtain that
\begin{eqnarray*}
J_{\l_0}(v_n) & = & \frac {1}{2} \|v_n\|^2 -\io H(v_n)dx \\
&\ge & \frac{1}{2} \|v^+_n\|^2-\io H(v_n)dx+ \frac{1}{2} \|v^-_n\|^2\\
&=&\frac{1}{2}
\|v^+_n\|^2-\int_{T_n}H(u_n)dx-\int_{S_n}H(v_n)dx+\frac12
\|v^-_n\|^2 \\
&= & \frac {1}{2} \|v^+_n\|^2 -\int_{T_n}H(u_n)dx
-\int_{S_n}H(w_n+\bar{u})dx+
\frac{1}{2} \|v^-_n\|^2 \\
&=& J_{\l_0}(u_n) + \frac {1}{2}
\Big(\|v^+_n\|^2-\|u_n\|^2\Big)+\frac {1}{2} \|v^-_n\|^2
-\int_{S_n}\Big(H(w_n+\bar{u})-H(\bar{u})\Big)dx,
\end{eqnarray*}
where we have used the fact that
 $$\io
H(u_n)dx=\int_{T_n}H(u_n)dx+\int_{S_n}H(\bar{u})dx.
$$
Also, since $v^+_n=u_n+w_n$, then
$$
\frac{1}{2} \Big(\|v^+_n\|^2-\|u_n\|^2\Big)=\frac{1}{2}
\|w_n\|^2+\langle u_n, w_n\rangle
.$$
Using that
$$
\{w_n\neq 0\}=\{u_n=\bar{u}\},$$
we see that
$$
\langle u_n, w_n\rangle\ge \io (-\Delta)^s\bar{u}w_n dx\ge
\l\int_{S_n}\bar{u}^q w_n dx+\int_{S_n}\bar{u}^p w_n dx.
$$
Therefore, recalling
that $\bar{u}$ is a supersolution to problem ($P_{\lambda}$) for $\l=\l_0$, we conclude that
\begin{eqnarray*}
J_{\l_0}(v_n) &\ge&
J_{\l_0}(\vartheta)+\frac{1}{2}\|w_n\|^2+\frac{1}{2}\|v_n^-\|^2\\
&-&\int_{S_n}
\Big\{H(w_n+\bar{u})-H(\bar{u})-\l_{0}\bar{u}^{q}w_n-\bar{u}^{p}w_n\Big\}dx.
\end{eqnarray*}
Taking into account that
 $$ 0\le
\frac{1}{q+1}(w_n+\bar{u})^{q+1}-\frac{1}{q+1}\bar{u}^{q+1}-\bar{u}^{q}w_n\le
\frac{q}{2}\frac{w_n^2}{\bar{u}^{1-q}},$$ and using the Picone
inequality in Theorem \ref{PICONE}, we find that $$ \bar{\l}\io
\frac{w_n^2}{\bar{u}^{1-q}}dx \leq \io \frac{w_{n}^{2}}{\bar{u}}
(-\Delta)^s\bar{u} \le \|w_n\|^2.$$
Then, we obtain that
$$\l_{0}\io\bigg\{\frac{1}{q+1}(w_n+\bar{u})^{q+1}-\frac{1}{q+1}\bar{u}^{q+1}-\bar{u}^{q}w_n\bigg\}dx\le
\frac{q}{2}\displaystyle\io\frac{w_n^2}{\bar{u}^{1-q}}dx\leq
\frac{q}{2}\|w_n\|^2.$$
Moreover, since $2\leq p+1$,
$$ 0\le
\frac{1}{p+1}(w_n+\bar{u})^{p+1}-\frac{1}{p+1}\bar{u}^{p+1}-\bar{u}^{p}w_n\le
\frac{p}{2}w_n^2(w_n+\bar{u})^{p-1}\le
C(\bar{u}^{p-1}w_n^2+w_n^{p+1}).$$
Hence, using the Sobolev inequality and the fact that
$|S_n|\to 0$ as $n\to \infty$, we reach that
$$
\io\Big\{
\frac{1}{p+1}(w_n+\bar{u})^{p+1}-\frac{1}{p+1}\bar{u}^{p+1}-\bar{u}^{p}w_n\Big\}dx
\le o(1)\|w_n\|^2. $$ Hence
\begin{eqnarray*}
J_{\l_0}(v_n)&\ge&
J_{\l_0}(\vartheta)+\frac{1}{2}\|w_n\|^2(1-q-o(1))+\frac{1}{2}\|v_n^-\|^2\\
& \ge &
J_{\l_0}(\vartheta)+\frac{1}{2}\|w_n\|^2(1-q-o(1))+o(1).
\end{eqnarray*}
So we get that
$$0>J_{\l_0}(v_n)-J_{\l_0}(\vartheta)\geq \frac{1}{2}\|w_n\|^2(1-q-o(1))+\frac{1}{2}\|v_n^-\|^2.$$
Since $q<1$, we conclude that $w_n=v_n^-=0$ for $n$ large, so $v_n \in M$ and then
$$J_{\l_0}(v_n)\geq J_{\l_0}(\vartheta),$$
which is in contradiction with~\eqref{PTT}.

This completes the proof of~\eqref{TBP}.
{F}rom this, we have that~$\vartheta$ is a local minimum for $J_{\lambda_0}$, and
$\hat{J}_{\l_0}$ has $u=0$ as a local minimum and then $\hat{J}_{\l_0}$ has a nontrivial critical point $\hat{u}$.
As a consequence, $u = \vartheta +\hat{u}$ is a solution, different from $\vartheta$, of
problem ($P_\l$). This concludes the proof of Theorem \ref{th2}.
\begin{remark}
{\rm If we consider the odd symmetric version of problem ($P_{\l}$), namely,
\begin{equation}\label{odd prb}
\left\{
\begin{array}{rcll}
(-\Delta)^s u & = & \lambda |u|^{q-1}u+|u|^{p-1}u & {\text{ in }}\O,\\
     \\
\mathcal{B}_{s}u & = & 0 &{\text{ in }} \RR^{N}\backslash \O \,,
\end{array}\right.
\end{equation}
the associated functional
$$ I_\l(u)=\frac{1}{2}\|u\|^2-\frac{\l}{q+1}\|u\|^{q+1}_{q+1}-\frac{1}{p}\|u\|^{p+1}_{p+1} $$
is even. Then, for $p<\frac{N+2s}{N-2s}$, by using the Lusternik-Schnirelman min-max argument, it is possible to prove
that problem (\ref{odd prb}) has infinitely many solutions with negative energy, see \cite{ABC} and \cite{azor peral},
and following closely the arguments in \cite{amrab}, \cite{ABC} the same holds for solutions with positive energy.}
\end{remark}

\section*{Acknowledgements}
The authors would like to express their gratitude to the anonymous referee for his/her comments and suggestions that improve the last version of the manuscript.

\

Part of this work was carried out while the second author was visiting the {\it
Weierstra{\ss}-Institut f\"ur Angewandte Analysis und Stochastik} in Berlin.
He thanks the institute for the warm hospitality.

\end{document}